\newtheorem{theorem}{Theorem}[section]
\newtheorem{lemma}[theorem]{Lemma}
\newtheorem{proposition}[theorem]{Proposition}
\newtheorem{definition}[theorem]{Definition}
\numberwithin{equation}{section}
\newcommand{\I}{\mathbb{I}}
\newcommand{\F}{\mathcal{F}}
\newcommand{\f}{\widehat{f}}
\newcommand{\K}{\mathcal{K}}
\newcommand{\U}{\mathcal{U}}
\newcommand{\C}{\mathcal{C}}
\newcommand{\sub}{\subseteq}
\begin{document}
\date{}
\title{Transitivity of some uniformities on fuzzy sets}        
\author{Daniel Jard\'on\footnote{ORCID: 0000-0001-9054-0788}, Iv\'an S\'anchez\footnote{ORCID: 0000-0002-4356-9147} \footnote{Corresponding author: Iv\'an S\'anchez}\, and Manuel Sanchis\footnote{ORCID: 0000-0002-5496-0977}}

\maketitle

\begin{abstract}
Given a uniform space $(X, \U)$, we denote by $\mathcal{F}(X)$ to the family of all normal upper semicontinuous fuzzy sets $u \colon X \to [0,1]$ with compact support. In this paper, we study transitivity on some uniformities on $\mathcal{F}(X)$: the level-wise uniformity $\U_{\infty}$, the Skorokhod uniformity $\U_{0}$, and the sendograph uniformity $\U_S$. If $f \colon (X, \U) \to (X, \U)$ is a continuous function, we mainly characterize when the induced dynamical systems $\f \colon (\F(X), \U_{\infty}) \to (\F(X), \U_{\infty})$, $\f \colon (\F(X), \U_{0}) \to (\F(X), \U_{0})$ and $\f \colon (\F(X), \U_{S}) \to (\F(X), \U_{S})$ are transitive, where $\f$ is the Zadeh's extension of $f$. 

\end{abstract}

\noindent {\bf Keywords:} Fuzzy set; Level-wise uniformity; Skorokhod uniformity; Sendograph uniformity; Zadeh's extension; Transitivity; Weakly mixing.

\section{Introduction}

A \textit{dynamical system} is a pair $(X,f)$, where $X$ is a topological space and $f \colon X \to X$ is a continuous function.  Let us recall that a dynamical system $(X,f)$ is \textit{transitive} if for each non-empty open subsets $U$ and $V$ of $X$, there exists $n \in \mathbb{N}$ such that $f^{n}(U)\cap V \neq \emptyset$. For instance, consider $S^{1}=\{x\in \mathbb{C}: ||x||=1\}$ with its usual metric, and $\theta \in \mathbb{R}$. Define $f_{\theta}:S^{1} \to S^{1}$ by $f_{\theta}(e^{i\lambda})=e^{i(\lambda+2\pi\theta)}$. Then $f_{\theta}$ is transitive if and only if $\theta$ is irrational (see \cite{RF}). It is also known that the tent function:
\begin{align*}
f(x)= \left\{ \begin{array}{lcc}
             2x, & \text{if  }  \hspace{0.3cm} 0\leq x \leq \frac{1}{2}. \\
             2(1-x), &  \text{if } \hspace{0.3cm} \frac{1}{2}\leq x \leq 1.
          \end{array}
   \right.\end{align*}
\noindent is transitive on $[0,1]$ with its usual metric.

Given a space $X$, we denote by $\mathcal{K}(X)$ to the family of non-empty compact subsets of $X$. If $(X,d)$ is a metric space, then $\mathcal{K}(X)$ is endowed with the Hausdorff metric $d_H$. If $f: X \to X$ is continuous, then we define the extension $\overline{f}: \mathcal{K}(X) \to \mathcal{K}(X)$ by $\overline{f}(A)=f(A)$, and it is also continuous on $(\mathcal{K}(X), d_H)$ \cite{Michael}. So, we can consider the dynamical system $(\mathcal{K}(X), \overline{f})$. In \cite{RF}, the author proved the following: If $f: X \to X$ is continuous and $\overline{f}$ is transitive, then $f$ is transitive. On the other hand, if $\theta$ is irrational, then $f_{\theta}$ is transitive on $S^{1}$, but $\overline{f_{\theta}}$ is not transitive. It is also shown that if $X=[0,1]$ with its usual metric and $f: X \to X$ is the tent function, then $\overline{f}$ is transitive on $X$ \cite{RF}. Hence, the latter facts suggest the following question: Under which conditions is $\overline{f}$ transitive?

The question above was answered independently by Banks and Peris in the realm of topological spaces (see \cite{Banks, Peris}). We need some notations in order to present the result of Banks and Peris. Let $(X, \tau)$ be a topological space. Then $\mathcal{K}(X)$ will be endowed with the Vietoris topology $\tau_{V}$. Given a continuous function $f:(X, \tau) \to (X, \tau)$, as in metric spaces, we can consider the induced function $\overline{f}: (\mathcal{K}(X), \tau_V) \to (\mathcal{K}(X), \tau_V)$. If $(X, d)$ is a metric space and $\tau_d$ is the topology on $X$ induced by the metric $d$, then the Vietoris topology $(\tau_d)_{V}$ on $\mathcal{K}(X)$ coincides with the topology $\tau_{d_H}$. Given a topological space $X$, a continuous function $f: X \to X$ is \textit{weakly mixing} if $f\times f$ is transitive on $X\times X$. We are now ready to introduce the result of Banks and Peris: Let $f: X \to X$ be a continuous function on a topological space $X$. Then $f$ is weakly mixing if and only if $\overline{f}$ is transitive \cite{Banks, Peris}.

Given a topological space $X$, we denote by $\mathcal{F}(X)$ to the family of normal upper semicontinuous fuzzy sets on $X$ with compact support. If $(X, d)$ is a metric space, then the hyperspace $\mathcal{F}(X)$ can be equipped with different metrics: the level-wise metric $d_\infty$, the Skorokhod metric $d_0$, the sendograph metric $d_S$, among others.

Let $X$ be a topological space and $f: X \to X$ a continuous function, then the \textit{Zadeh's extension} of $f$ to $\mathcal{F}(X)$ is denoted by $\widehat{f}$ and defined as follows:
\begin{align*}
\widehat{f}(u)(x)= \left\{ \begin{array}{lcc}
             \sup\{u(z): z\in f^{-1}(x)\}, & \text{if  }  \hspace{0.3cm} f^{-1}(x)\neq \emptyset. \\
             0, &  \text{if } \hspace{0.3cm}  f^{-1}(x)= \emptyset.
          \end{array}
   \right.\end{align*}

\noindent Notice that $\widehat{f}: \mathcal{F}(X) \to \mathcal{F}(X)$. The result of Banks and Peris was extended to $\mathcal{F}(X)$ as follows: If $(X,d)$ be a metric space, then $((X,d),f)$ is weakly mixing if and only if $((\mathcal{F}(X), d_\infty), \widehat{f})$ is transitive; if and only if $((\mathcal{F}(X), d_0), \widehat{f})$ is transitive; if and only if $((\mathcal{F}(X), d_S), \widehat{f})$ is transitive (see \cite{Transitivity}).

It is a well-known fact that every uniformity $\U$ on a set $X$ induces a topology $\tau_\U$ on $X$. To be precise, the topology $\tau_\U$ is the family $\{V\sub X\, :\, \text{for every } x\in V \text{, there exists } U \in \U \text{ such that } U(x)\sub V\}$, where  $U(x)=\{y\in X\, : \, (x,y)\in U\}$. In this case, the topological space $(X,\tau_\U)$ is a Tychonoff space (for the details, we refer to the reader to Chapter~8 of the classic text  \cite{Engelking}).  If $(X,\U)$ is a uniform space, then $\F(X)$ is the family of all normal upper semicontinuous fuzzy sets on $(X,\tau_\U)$ with compact support. In \cite{JSS23}, we introduce some uniformities on the set $\F(X)$: the level-wise uniformity $\U_{\infty}$, the Skorokhod uniformity $\U_{0}$, and the sendograph uniformity $\U_S$. In this paper, we show that if $f: (X, \U) \to (X, \U)$ is continuous, then $\f: (\F(X), \U_{\infty}) \to (\F(X), \U_{\infty})$, $\f: (\F(X), \U_{0}) \to (\F(X), \U_{0})$, and $\f: (\F(X), \U_{S}) \to (\F(X), \U_{S})$ are continuous, where $\f$ is the Zadeh's extension of $f$ (see Theorems \ref{Th:Conti-U_infinito}, \ref{Th:Conti-U_0}, and \ref{Th:Conti-U_S}). We mainly prove that if $f \colon (X, \U) \to (X, \U)$ is a continuous function, then $f: (X, \U) \to (X, \U)$ is weakly mixing if and only if
$\f \colon (\F(X), \U_{\infty}) \to (\F(X), \U_{\infty})$ is transitive; if and only if $\f \colon (\F(X), \U_{0}) \to (\F(X), \U_{0})$ is transitive; if and only if $\f \colon (\F(X), \U_{S}) \to (\F(X), \U_{S})$ is transitive (see Theorem \ref{Th:transitivity}).

\section{Preliminaries}

In this section, we introduce the results on fuzzy theory that we need in the sequel. A fuzzy set $u$ on a topological space $X$ is a function $u \colon X \to 	\I	$, where $	\I	$ denotes the closed unit interval $[0,1]$. For each $\alpha 	\in  (0,1]$, we define the $\alpha$-\textit{cut} of $u$ as the set $u_\alpha=\{x	\in X: u(x)\geq \alpha\}$ . The \textit{support} of $u$, denoted by $u_0$, is the set $\overline{\{x \in X:u(x)>0\}}$. Let us note that $u_0=\overline{\bigcup\{u_\alpha:{\alpha	\in (0,1]}\}}$. Let $\mathcal{F}(X)$ be the family of all normal upper semicontinuous fuzzy sets $u \colon X \to \mathbb{I}$ with compact support. 

Let $X$ be a topological space. If $f \colon X \to X$ is a function, the Zadeh's extension of $f$ to $\F(X)$ is denoted by $\f \colon \F(X) \to \F(X)$ and defined as follows:

$$\f(u)(x)=\left\{
  \begin{array}{ll}
  \medskip
  \sup\{u(z):z 	\in f^{-1}(x)\},  & f^{-1}(x)\neq \emptyset. \\
  \medskip

   0, & f^{-1}(x)= \emptyset.\\
  
   \end{array}
   \right.$$

Two useful results on Zadeh's extension are the following. The second one is an immediate consequence of the first one.

\begin{proposition}{\rm \cite{JSS}}\label{Prop:Zadeh's_level}
Let $X$ be a Hausdorff space.
If $f \colon X \to X$ is a continuous function, then $[\f(u)]_\alpha=f(u_\alpha)$ for each $u 	\in \F(X)$ and $\alpha 	\in 	\I	$.
\end{proposition}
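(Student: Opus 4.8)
The plan is to establish the set identity $[\f(u)]_\alpha=f(u_\alpha)$ by a direct double inclusion, dealing with the positive cuts $\alpha\in(0,1]$ first and then reducing the support case $\alpha=0$ to them together with the continuity of $f$. Fix $u\in\F(X)$. The inclusion $f(u_\alpha)\subseteq[\f(u)]_\alpha$ is the easy half and uses nothing beyond the definitions: if $x=f(z)$ with $u(z)\geq\alpha$, then $z\in f^{-1}(x)\neq\emptyset$, so $\f(u)(x)=\sup\{u(w):w\in f^{-1}(x)\}\geq u(z)\geq\alpha$, i.e. $x\in[\f(u)]_\alpha$. I would dispatch this in one line, valid for every $\alpha\in(0,1]$.

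For the reverse inclusion with $\alpha>0$ I would take $x\in[\f(u)]_\alpha$ and set $s:=\f(u)(x)\geq\alpha$, so $f^{-1}(x)\neq\emptyset$ and $s=\sup\{u(w):w\in f^{-1}(x)\}>0$. The key observation is that, since $X$ is Hausdorff (hence $T_1$) and $f$ is continuous, every fibre $f^{-1}(x)$ is closed; and since $u$ is upper semicontinuous with compact support, each cut $u_\beta$ with $\beta\in(0,1]$ is a closed subset of the compact set $u_0$, hence compact. Therefore, for $0<\beta<s$ the set $f^{-1}(x)\cap u_\beta$ is compact and, by the definition of $s$ as a supremum, non-empty. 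These sets form a decreasing chain of non-empty compacta, so their intersection
\[
\bigcap_{0<\beta<s}\bigl(f^{-1}(x)\cap u_\beta\bigr)=f^{-1}(x)\cap\bigcap_{0<\beta<s}u_\beta=f^{-1}(x)\cap u_s
\]
is non-empty; picking $z$ in it gives $f(z)=x$ and $u(z)\geq s\geq\alpha$, hence $x\in f(u_\alpha)$. This "compactness forces the supremum to be essentially attained" step is the heart of the argument and the only place where both hypotheses — Hausdorffness of $X$ and upper semicontinuity together with compactness of the support of $u$ — are genuinely used; this is where I expect the only real obstacle to lie.

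Finally, for $\alpha=0$ I would note that straight from the definition of $\f$ one has $\{x:\f(u)(x)>0\}=f(\{z:u(z)>0\})$, so $[\f(u)]_0=\overline{f(\{z:u(z)>0\})}$. Since $\{z:u(z)>0\}\subseteq u_0$ and $f(u_0)$ is compact — hence closed in the Hausdorff space $X$ — this closure is contained in $f(u_0)$; conversely, continuity of $f$ gives $f(u_0)=f\bigl(\overline{\{z:u(z)>0\}}\bigr)\subseteq\overline{f(\{z:u(z)>0\})}=[\f(u)]_0$. The two inclusions yield $[\f(u)]_0=f(u_0)$. Everything outside the starred step is either a one-line consequence of the definition of $\f$ or the standard fact that continuous images of compact sets are compact; as a byproduct the identity also re-confirms that $\f$ maps $\F(X)$ into itself, since $[\f(u)]_1=f(u_1)\neq\emptyset$, $[\f(u)]_0=f(u_0)$ is compact, and the closedness of all cuts of $\f(u)$ is exactly upper semicontinuity.
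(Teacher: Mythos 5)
Your proof is correct: the easy inclusion $f(u_\alpha)\subseteq[\f(u)]_\alpha$ is immediate from the definition of the supremum, the reverse inclusion for $\alpha>0$ correctly exploits that the closed fibre $f^{-1}(x)$ meets the nested non-empty compact sets $u_\beta$ ($0<\beta<s$) so that the supremum $s=\f(u)(x)$ is actually attained, and the $\alpha=0$ case is handled properly via $f(\overline{A})\subseteq\overline{f(A)}\subseteq f(\overline{A})$ using compactness of $f(u_0)$ in the Hausdorff space $X$. The paper itself only cites this proposition from \cite{JSS} without reproducing a proof, but your argument is the standard one for this result, with all hypotheses (Hausdorffness, upper semicontinuity, compact support) used exactly where they are needed.
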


\begin{proposition}
Let $X$ be a Hausdorff space. If $f \colon X \to X$ is a continuous function, then $\left(\f\right)^{n}=\widehat{f^{n}}$ for each $n 	\in \mathbb{N}$.
\end{proposition}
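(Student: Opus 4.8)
The plan is to reduce everything to Proposition~\ref{Prop:Zadeh's_level} together with the standard fact that an upper semicontinuous fuzzy set is completely determined by its family of $\alpha$-cuts. Concretely, for any $u\in\F(X)$ and any $x\in X$ one has $u(x)=\sup\{\alpha\in(0,1]:x\in u_\alpha\}$ (with the convention $\sup\emptyset=0$), so if $u,v\in\F(X)$ satisfy $u_\alpha=v_\alpha$ for every $\alpha\in(0,1]$, then $u=v$. Hence it suffices to show that $\bigl[(\f)^{n}(u)\bigr]_\alpha=\bigl[\widehat{f^{n}}(u)\bigr]_\alpha$ for every $u\in\F(X)$ and every $\alpha\in(0,1]$.

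First I would observe that $f^{n}$ is continuous for each $n\in\N$, being a composition of continuous maps, so Proposition~\ref{Prop:Zadeh's_level} is applicable both to $f$ and to $f^{n}$; in particular $\bigl[\widehat{f^{n}}(u)\bigr]_\alpha=f^{n}(u_\alpha)$. One also needs that $\f$ maps $\F(X)$ into $\F(X)$, so that the iterate $(\f)^{n}$ is well defined; this is part of the basic setup of Zadeh's extension and may be quoted.

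The core of the argument is an induction on $n$. The case $n=1$ is the identity $\f=\widehat{f^{1}}$, which is immediate. For the inductive step, assuming $(\f)^{n}=\widehat{f^{n}}$, I would compute, for $\alpha\in(0,1]$,
\[
\bigl[(\f)^{n+1}(u)\bigr]_\alpha
=\bigl[\f\bigl((\f)^{n}(u)\bigr)\bigr]_\alpha
=f\bigl(\bigl[(\f)^{n}(u)\bigr]_\alpha\bigr)
=f\bigl(f^{n}(u_\alpha)\bigr)
=f^{n+1}(u_\alpha)
=\bigl[\widehat{f^{n+1}}(u)\bigr]_\alpha,
\]
where the second equality uses Proposition~\ref{Prop:Zadeh's_level} applied to $f$, the third uses the induction hypothesis (equivalently Proposition~\ref{Prop:Zadeh's_level} applied to $f^{n}$), and the last uses it applied to $f^{n+1}$. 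Since this holds for all $\alpha\in(0,1]$, the cut-determinacy observation above yields $(\f)^{n+1}(u)=\widehat{f^{n+1}}(u)$ for every $u\in\F(X)$, completing the induction.

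There is no genuine obstacle here; the statement is essentially bookkeeping on top of Proposition~\ref{Prop:Zadeh's_level}. The only points requiring a moment's care are (i) recording that a member of $\F(X)$ is recovered from its $(0,1]$-indexed cuts, which is why equality of all cuts forces equality of the fuzzy sets, and (ii) the Hausdorffness hypothesis, which is needed precisely so that Proposition~\ref{Prop:Zadeh's_level} holds (it guarantees that $f$ carries the compact cuts $u_\alpha$ to compact, hence closed, sets, so that $f(u_\alpha)$ is again a cut).
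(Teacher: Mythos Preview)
Your proof is correct and follows exactly the route the paper indicates: the paper does not write out a proof but simply remarks that the statement is an immediate consequence of Proposition~\ref{Prop:Zadeh's_level}, which is precisely the reduction you carry out via induction and cut-determinacy.
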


In the sequel, the latter result allows us to write  $\widehat{f^{n}}$ instead of $\left(\f\right)^{n}$. 

Given a non-empty subset $A$ of a Hausdorff space $X$, we denote by $\chi_A \colon X \to 	\I	$ the characteristic function of $A$. For the one-point set $\{x\}$, we put $\chi_x$ instead of $\chi_{\{x\}}$. If  $\K(X)$ denotes the hyperspace of all the non-empty compact subsets of $X$, we have the following proposition, which shows that $\f$ sends $\K(X)$ into 
itself.  

\begin{proposition}{\rm \cite{JSS}}
Let $f$ be a continuous function from a Hausdorff space $X$ into itself. Then $\f(\chi_K)=\chi_{f(K)}$ for each $K 	\in \K(X)$.
\end{proposition}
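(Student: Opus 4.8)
The plan is to reduce the statement to the level-set description of the Zadeh extension provided by Proposition~\ref{Prop:Zadeh's_level}. First I would record the elementary observation that for a non-empty $K\sub X$ the fuzzy set $\chi_K$ has $\alpha$-cut $(\chi_K)_\alpha=K$ for every $\alpha\in(0,1]$ and support $(\chi_K)_0=\overline{K}$; when $K$ is compact it is closed (since $X$ is Hausdorff), so $(\chi_K)_0=K$ and hence $\chi_K\in\F(X)$. Likewise $\chi_{f(K)}\in\F(X)$ because $f(K)$ is a non-empty compact subset of the Hausdorff space $X$. This is essentially the only place where the hypotheses ``$X$ Hausdorff'' and ``$f$ continuous'' are used: they guarantee that both sides of the asserted identity are genuine members of $\F(X)$.

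Next I would apply Proposition~\ref{Prop:Zadeh's_level} with $u=\chi_K$: for every $\alpha\in\I$ we obtain $[\f(\chi_K)]_\alpha=f\big((\chi_K)_\alpha\big)=f(K)$ (for $\alpha\in(0,1]$ this is the $\alpha$-cut, and for $\alpha=0$ it is the support $\overline{f(K)}=f(K)$). Thus $\f(\chi_K)$ and $\chi_{f(K)}$ have exactly the same family of level sets. Since any $v\in\F(X)$ is recovered from its cuts by $v(x)=\sup\{\alpha\in(0,1]:x\in v_\alpha\}$ (with the convention $\sup\emptyset=0$), two elements of $\F(X)$ with the same cuts coincide; applying this to $\f(\chi_K)$ and $\chi_{f(K)}$ finishes the argument.

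Alternatively, and perhaps more transparently, I would give the direct pointwise computation from the definition of $\f$. If $x\in f(K)$, choose $z\in K$ with $f(z)=x$; then $z\in f^{-1}(x)$ and $\chi_K(z)=1$, so $\f(\chi_K)(x)=\sup\{\chi_K(z'):z'\in f^{-1}(x)\}=1$. If $x\notin f(K)$, then either $f^{-1}(x)=\emptyset$, in which case $\f(\chi_K)(x)=0$ by definition, or $f^{-1}(x)\neq\emptyset$ but every $z'\in f^{-1}(x)$ satisfies $f(z')=x\notin f(K)$, hence $z'\notin K$ and $\chi_K(z')=0$, so again the supremum is $0$. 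In every case $\f(\chi_K)(x)=\chi_{f(K)}(x)$. I do not anticipate any real obstacle in this proof; the only point requiring a word of care is the membership $\chi_{f(K)}\in\F(X)$ addressed above, which is exactly why the statement is formulated for a continuous $f$ on a Hausdorff space $X$.
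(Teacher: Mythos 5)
Your argument is correct. Note that the paper itself gives no proof of this proposition --- it is quoted from \cite{JSS} --- so there is nothing in-text to compare against; your second, direct pointwise computation is the standard elementary argument one would expect there, and your first argument via Proposition~\ref{Prop:Zadeh's_level} is also valid (and correctly isolates the only role of the hypotheses, namely that $K$ and $f(K)$ are closed and compact so that both $\chi_K$ and $\chi_{f(K)}$ actually belong to $\F(X)$). Either route suffices.
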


One of the central concepts of this paper is the notion of \emph{uniform space}. From our point of view, the most efficient approach to uniform spaces is via \emph{entourages of the diagonal}. We begin by introducing some notations. Let $X$ be a non-empty set, and let $A$ and $B$ be subsets of $X\times X$, i.e., relations on the set $X$. The inverse relation of $A$ will be denoted by $A\sp{-1}$ , and the composition of $A$ and $B$ will be denoted by $A\circ B$. Thus, we have that
$$A^{-1}=\{(x,y)\in X\times X: (y,x)\in A \}$$ and 
{\small $$
A\circ B=\{(x,y)\in X\times X: \text{ there exists }  z\in X \text{ such that } (x,z)\in A  \text{ and }  (z,y)\in B  \}.
$$}
The symbol $A^2$ stands for $A\circ A$ and $\Delta\sb{X}$ for the diagonal of $X$, that is, the subset  $\{(x,x):x\in X \}$ of $ X\times X$. Every set $A\sub X\times X$ that contains $\Delta\sb{X}$ is called an \emph{entourage of the diagonal}. We will denote by $\mathcal{D}_{X}$ the family of all entourages of the diagonal of $X$.

\begin{definition}{\rm
A \emph{uniformity} on a non-empty set $X$ is a subfamily  $\thinspace\U$ of $\thinspace\mathcal{D}_{X}$ that satisfies the following conditions: 

\begin{enumerate}[{\rm (U1)}] 
\item If $A\in \U $ and $A\sub B\in \mathcal{D}_X$, then $B\in \U.$

\item  If $A,B\in \U$, then  $U\cap V\in \U$.

\item  For every  $A\in \U$, there exists $B\in \U$  such that  $B^2\sub A$.

\item  For every  $A\in \U$, there exists $B\in \U$  such that  $B^{-1}\sub A$.

\item  $\bigcap_{A\in\U}A = \Delta_X$.
\end{enumerate}
}
\end{definition}

A \emph{uniform space} is a pair $(X,\U)$ consisting of a set $X$ and a uniformity $\U$ on the set $X$. Let $(X,\U)$ be a uniform space. A family $\mathcal{B}\sub \U$ is called a base for the uniformity $\U$ if for every  $A\in \U$, there exists $B\in \mathcal{B}$ such that $B\sub A$. The following result  is well known and easy to prove.

\begin{proposition} Let $X$  be a non-empty set. A non-empty family $\mathcal{B}$  of subsets of $X\times X$ is a base for some uniformity on $X$ if and only if it satisfies  the following properties:

\begin{enumerate}[{\rm(BS1)}]
\item For any $A,B\in \mathcal{B}$, there exists $C\in \mathcal{B}$ such that $C\subset A\cap B$.  

\item For every $A\in \mathcal{B}$, there exists  $B\in \mathcal{B}$   such that $B^{-1} \sub A$.

\item For every $A\in \mathcal{B}$, there exists  $B\in \mathcal{B}$   such that $B^{2} \sub A$.

\item  $\bigcap_{A\in \mathcal B}A=\Delta_{X}$.
\end{enumerate}
\end{proposition}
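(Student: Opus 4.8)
The plan is to prove both directions directly from the axioms, the real work lying in the nontrivial direction, where one must write down an explicit candidate uniformity and check (U1)--(U5).

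For the forward implication I would assume $\mathcal{B}$ is a base for a uniformity $\U$ and read off (BS1)--(BS4) from (U1)--(U5) together with the defining base property ``every $A\in\U$ contains a member of $\mathcal{B}$''. Given $A,B\in\mathcal{B}\sub\U$, axiom (U2) yields $A\cap B\in\U$, and the base property produces $C\in\mathcal{B}$ with $C\sub A\cap B$; that is (BS1). Applying (U4) to $A\in\mathcal{B}$ gives an entourage whose inverse lies in $A$, and shrinking it to a member of $\mathcal{B}$ while using the monotonicity $B\sub B'\Rightarrow B^{-1}\sub (B')^{-1}$ gives (BS2); applying (U3) and using $B\sub B'\Rightarrow B^{2}\sub (B')^{2}$ gives (BS3) in the same fashion. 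Finally, since $\mathcal{B}$ is a base, every $A\in\U$ contains some member of $\mathcal{B}$, so $\bigcap_{B\in\mathcal{B}}B\sub\bigcap_{A\in\U}A=\Delta_X$ by (U5), while the reverse inclusion is immediate because each member of $\mathcal{B}$ is an entourage of the diagonal; this is (BS4).

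For the converse I would put $\U:=\{A\in\mathcal{D}_X:\ B\sub A\text{ for some }B\in\mathcal{B}\}$ and verify the uniformity axioms. The first point to pin down is that $\mathcal{B}\sub\U$: this requires $\Delta_X\sub B$ for every $B\in\mathcal{B}$, which is exactly what (BS4) forces (it makes $\bigcap_{B\in\mathcal{B}}B=\Delta_X$, hence each $B$ contains the diagonal), after which $B\sub B$ finishes the point; note also $\U\neq\emptyset$ since $\mathcal{B}\neq\emptyset$. After this: (U1) is built into the definition of $\U$; (U2) uses (BS1) to intersect two witnessing members of $\mathcal{B}$ down to one, observing that $A\cap B\in\mathcal{D}_X$; (U3) and (U4) use (BS3) and (BS2) respectively, applied to a witness $A'\in\mathcal{B}$ with $A'\sub A$ and then combined with the inclusion $A'\sub A$; and (U5), i.e. $\bigcap_{A\in\U}A=\Delta_X$, follows from $\mathcal{B}\sub\U$ together with (BS4) for one inclusion, and from the fact that every element of $\U$ contains $\Delta_X$ for the other. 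That $\mathcal{B}$ is a base for this $\U$ is then immediate from the definition of $\U$.

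I do not expect a genuine obstacle: the statement is a routine passage between the two standard presentations of a uniformity. The only spots that call for a little care are checking membership in $\mathcal{D}_X$ at each step (that $A\cap B$, the various $B^2$, and $B^{-1}$ actually contain the diagonal before being declared entourages) and keeping track of the monotonicity of $A\mapsto A^{-1}$ and $A\mapsto A^{2}$ when passing from a convenient subset of an entourage back to the entourage itself.
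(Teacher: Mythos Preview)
Your proposal is correct and proceeds exactly as one would expect for this standard result. The paper itself does not supply a proof; it merely introduces the proposition with the remark that it ``is well known and easy to prove,'' so there is nothing substantive to compare your argument against.
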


As usual,  a set $X$ equipped with a topology $\tau$ is called a \emph{topological space}, and it will be denoted   by $(X, \tau)$. It is a well-known fact that every uniformity $\U$ on a set $X$ induces a topology $\tau(\U)$ on $X$. To be precise, the topology $\tau(\U)$ is the family $\{V\sub X\, :\, \text{for every } x\in V \text{, there exists } U \in \U \text{ such that } U(x)\sub V\}$, where  $U(x)=\{y\in X\, : \, (x,y)\in U\}$. In this case, the topological space $(X,\tau(\U))$ is a Tychonoff space (for the details, we refer to the reader to Chapter~8 of the classic text  \cite{Engelking}).  For this reason, in the sequel, all the topological spaces are Tychonoff.
 
We turn to a brief discussion of the hyperspaces that we will consider in this paper. Given a Tychonoff topological space $(X,\tau)$, the symbols $\C(X)$ and $\K(X)$ denote, respectively,  the hyperspaces defined by 

$$
\begin{array}{l}
\C(X)=\{E\sub X \, : \, E \text{ is closed and non-empty}\},\medskip \\
\K(X)=\{E \in \C(X)\,: \, E \text{ is compact}\}.
\end{array}
$$
Thus, in the case of a uniform space $(X,\U)$,  $\C(X)$ (respectively, $\K(X)$) denotes the hyperspace of all non-empty closed (respectively, non-empty compact) subsets of $(X,\tau(\U))$. We will see that $\C(X)$ and $\K(X)$ can be endowed with a natural uniformity in this situation. 

Let $(X,\U)$ be a uniform space. For each $U\in \U$ and each $A\subset X$, let us  define  $U(A)=\bigcup_{x \in A}U(x)$.  Now, for each $U\in \U$ consider the families 
$$
\begin{array}{l}
\C[U]=\{(A,B)\in \C(X)\times \C(X):A\sub U(B),\,\, B \sub U(A)\}, \medskip\\
\K[U]=\{(A,B)\in \K(X)\times \K(X) \, : \, A\sub U(B),\,\, B \sub U(A)\}.

\end{array}
$$
Among the most interesting results in the theory of hyperspaces are the following three results. 

\begin{proposition} {\rm \cite{Bourbaki, Michael}} If $(X,\U)$ is a uniform space, then  $\{\K[U]: U\in \U\}$ is a base for a uniformity $\K(\U)$ on $\K(X)$. 
\end{proposition}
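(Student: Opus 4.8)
The plan is to verify that the family $\mathcal{B}=\{\K[U]:U\in\U\}$ satisfies the four conditions (BS1)--(BS4) of the base-characterization proposition above; the desired uniformity $\K(\U)$ is then the one for which $\mathcal{B}$ is a base. Before that I would record two elementary observations used repeatedly. First, since $\Delta_X\sub U$ for every $U\in\U$, we have $x\in U(x)$ for all $x\in X$, hence $A\sub U(A)$ for every $A\sub X$; in particular $(A,A)\in\K[U]$ for every $A\in\K(X)$, so each $\K[U]$ is an entourage of the diagonal of $\K(X)$ and $\mathcal{B}\sub\mathcal{D}_{\K(X)}$. Second, for every $V\in\U$ and every $S\sub X$ one has $V\big(V(S)\big)=V^2(S)$, which follows at once by unwinding the definitions of $U(\cdot)$ and of the composition of relations. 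I would also note that each $\K[U]$ is symmetric, i.e.\ $\K[U]^{-1}=\K[U]$, because the conjunction ``$A\sub U(B)$ and $B\sub U(A)$'' is invariant under interchanging $A$ and $B$; this makes (BS2) immediate, taking $B=A$.

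For (BS1), given $U,V\in\U$ I would put $W=U\cap V\in\U$ and check that $\K[W]\sub\K[U]\cap\K[V]$: since $W(S)\sub U(S)$ and $W(S)\sub V(S)$ for every $S\sub X$, any pair $(A,B)$ with $A\sub W(B)$ and $B\sub W(A)$ lies in both $\K[U]$ and $\K[V]$. For (BS3), given $\K[U]$ I would use (U3) to pick $V\in\U$ with $V^2\sub U$ and show $\K[V]\circ\K[V]\sub\K[U]$: if $(A,B)\in\K[V]$ and $(B,C)\in\K[V]$, then $A\sub V(B)\sub V\big(V(C)\big)=V^2(C)\sub U(C)$ and likewise $C\sub V(B)\sub V\big(V(A)\big)=V^2(A)\sub U(A)$, so $(A,C)\in\K[U]$.

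The one condition with genuine content is (BS4); the inclusion $\Delta_{\K(X)}\sub\bigcap_{U\in\U}\K[U]$ is the first observation above, so it remains to prove the reverse inclusion. Suppose $A,B\in\K(X)$ with $A\neq B$ and, without loss of generality, pick $a\in A\setminus B$. Since $(X,\tau(\U))$ is Hausdorff, the compact set $B$ is closed, so $X\setminus B$ is an open neighbourhood of $a$ and there is $U_0\in\U$ with $U_0(a)\sub X\setminus B$, i.e.\ $U_0(a)\cap B=\emptyset$. Choosing $V\in\U$ with $V^{-1}\sub U_0$, I claim $(A,B)\notin\K[V]$: otherwise $a\in A\sub V(B)$, so $(b,a)\in V$ for some $b\in B$, whence $(a,b)\in V^{-1}\sub U_0$ and $b\in U_0(a)\cap B$, a contradiction. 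Hence $(A,B)\notin\bigcap_{U\in\U}\K[U]$, which proves (BS4). The only place one leaves pure relation-chasing is thus this last step, where one invokes the topology $\tau(\U)$ and the fact that compact subsets of a Hausdorff space are closed; everything else is a routine transcription of (U1)--(U4) for $\U$ to the hyperspace level.
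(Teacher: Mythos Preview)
Your verification of (BS1)--(BS4) is correct and complete; each step is clean and the only nontrivial point, (BS4), is handled properly by using that compact subsets of the Hausdorff space $(X,\tau(\U))$ are closed. The paper itself does not prove this proposition but simply cites it from \cite{Bourbaki, Michael}, so there is no in-paper argument to compare against; your direct verification via the base-characterization proposition is exactly the standard proof one finds in those references.
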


A remarkable result by Michael \cite{Michael} allows us to describe the topology induced by the uniformity $\K(\U)$. Let us recall that, for any topological space $(X,\tau)$, the topology $\tau$ induces a topology $\tau_V$ on $\C(X)$, the so-called \emph{Vietoris topology}, a base for $\tau_V$ is the family of all sets of the form 
$$
\mathcal{V}\left <V_1,\ldots, V_k\right>=\left\{ B\in \C(X)\, : \, B\subset \bigcup_{i=1}^{k}V_i \text{ and } B\cap V_i \not=\emptyset \text{ for } i=1,\ldots,k\right\},
$$
\noindent where $V_1,V_2,\ldots, V_n$ is a finite sequence of non-empty open sets of $X$.

\begin{theorem}{\rm \cite[Theorem 3.3]{Michael}}
If  $(X, \U)$ is a uniform space, then the topology induced by $\K(\U)$ on $\K(X)$ coincides with the Vietoris topology induced by $\tau(\U)$ on $\K(X)$. 
\end{theorem}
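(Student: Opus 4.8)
The plan is to compare the two topologies on $\K(X)$ directly through local bases: at a point $A\in\K(X)$, the family $\{\K[U](A):U\in\U\}$ with $\K[U](A)=\{B\in\K(X):A\sub U(B)\text{ and }B\sub U(A)\}$ is a base of $\tau(\K(\U))$-neighbourhoods of $A$, while the traces on $\K(X)$ of the sets $\mathcal{V}\langle V_1,\dots,V_k\rangle$ that contain $A$ form a base of $\tau_V$-neighbourhoods of $A$. I will use three standard facts about $(X,\U)$: the symmetric members of $\U$ form a base of $\U$; for every $U\in\U$ there is $V\in\U$ with $V^2\sub U$; and for every $U\in\U$ and $x\in X$, the set $U(x)$ is a $\tau(\U)$-neighbourhood of $x$ (indeed, if $V^2\sub U$ then $V(x)\sub\Int U(x)$). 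Compactness of the members of $\K(X)$ is what converts pointwise choices of entourages into a single one.

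For $\tau(\K(\U))\sub\tau_V$, fix $A\in\K(X)$ and $U\in\U$ and produce a basic Vietoris neighbourhood of $A$ contained in $\K[U](A)$. Pick a symmetric $V\in\U$ with $V^2\sub U$. The open sets $\Int V(a)$, $a\in A$, cover the compact set $A$, so $A\sub\bigcup_{i=1}^{k}\Int V(a_i)$ for finitely many $a_1,\dots,a_k\in A$. Set $W=\mathcal{V}\langle\Int V(a_1),\dots,\Int V(a_k)\rangle\cap\K(X)$. Then $A\in W$ because $A\sub\bigcup_i\Int V(a_i)$ and $a_i\in A\cap\Int V(a_i)$. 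If $B\in W$, then every $b\in B$ lies in some $V(a_i)$, so $b\in V(a_i)\sub U(a_i)\sub U(A)$, giving $B\sub U(A)$; and for $x\in A$ there is $i$ with $x\in\Int V(a_i)$, while $B\cap\Int V(a_i)\neq\emptyset$ yields $b\in B$ with $b\in V(a_i)$, so $(a_i,x),(a_i,b)\in V$, hence $(b,a_i),(a_i,x)\in V$ by symmetry and $(b,x)\in V^2\sub U$, i.e.\ $x\in U(B)$; thus $A\sub U(B)$ and $B\in\K[U](A)$.

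For $\tau_V\sub\tau(\K(\U))$, fix $A\in\K(X)$ and a basic Vietoris neighbourhood $\mathcal{V}\langle V_1,\dots,V_k\rangle$ of $A$, so $A\sub\bigcup_iV_i$ and $A\cap V_i\neq\emptyset$ for each $i$; I must find $U\in\U$ with $\K[U](A)\sub\mathcal{V}\langle V_1,\dots,V_k\rangle$. First, to keep members of $\K[U](A)$ inside $\bigcup_iV_i$: for each $x\in A$ choose $i(x)$ with $x\in V_{i(x)}$ and a symmetric $W_x\in\U$ with $W_x^2(x)\sub V_{i(x)}$, cover $A$ by finitely many $\Int W_{x_j}(x_j)$, and put $U'=\bigcap_jW_{x_j}$; then for $x\in A$, choosing $j$ with $x\in\Int W_{x_j}(x_j)$, one has $U'(x)\sub W_{x_j}(x)\sub W_{x_j}^2(x_j)\sub V_{i(x_j)}$. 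Second, to make members of $\K[U](A)$ meet each $V_i$: pick $a_i\in A\cap V_i$ and a symmetric $U_i\in\U$ with $U_i(a_i)\sub V_i$. Put $U=U'\cap\bigcap_{i=1}^{k}U_i\in\U$, which is symmetric. If $B\in\K[U](A)$, then $B\sub U(A)\sub U'(A)$ gives $B\sub\bigcup_iV_i$; and from $a_i\in A\sub U(B)$ there is $b\in B$ with $(b,a_i)\in U$, hence $(a_i,b)\in U_i$ by symmetry, so $b\in U_i(a_i)\sub V_i$ and $B\cap V_i\neq\emptyset$. Thus $B\in\mathcal{V}\langle V_1,\dots,V_k\rangle$, which proves the theorem.

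The genuinely delicate point is the choice of $U$ in the second inclusion: the same entourage must be fine enough along all of $A$ to refine the open cover $\{V_i\}$ — a compactness argument producing $U'$ — and simultaneously fine enough at each of the finitely many witnesses $a_i$ to force the points of $B$ witnessing $B\cap V_i\neq\emptyset$ to land in $V_i$. Everything else is the standard bookkeeping with symmetric entourages and with the inclusions between compositions of entourages.
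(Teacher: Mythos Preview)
Your proof is correct. The paper itself does not prove this theorem: it is quoted verbatim from Michael's 1951 paper and cited as \cite[Theorem 3.3]{Michael}, so there is no in-paper argument to compare against. What you have written is essentially the standard proof (and essentially Michael's): compare neighbourhood bases at each $A\in\K(X)$, using compactness of $A$ to pass from pointwise entourages to a single one. Both inclusions are handled cleanly; the only cosmetic remark is that in the first inclusion you could dispense with $V^2\sub U$ for the containment $B\sub U(A)$ (only $V\sub U$ is used there), but you do need it for $A\sub U(B)$, so the choice is appropriate.
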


Allowing for the previous result, if no confusion can arise,   $\K(X)$ will denote the hyperspace of all non-empty compact subsets of $(X,\tau(\U))$ equipped  with the Vietoris topology induced by $\tau(\U)$.  For the hyperspace $\C(X)$, we have the following.

\begin{proposition} {\rm \cite{Bourbaki, Michael}} If $(X,\U)$ is a uniform space, then  $\{\C[U]: U\in \U\}$ is a base for a uniformity $\C(\U)$ on $\C(X)$. 
\end{proposition}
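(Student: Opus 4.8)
The plan is to invoke the characterization of uniformity bases recalled above, applied to the ground set $\C(X)$: it suffices to show that the non-empty family $\mathcal{B}=\{\C[U]:U\in\U\}$ of subsets of $\C(X)\times\C(X)$ consists of entourages of $\Delta_{\C(X)}$ and satisfies (BS1)--(BS4). That each $\C[U]$ contains $\Delta_{\C(X)}$ is immediate: since $\Delta_X\subset U$ we have $A\subset U(A)$ for every $A\in\C(X)$, so $(A,A)\in\C[U]$; this also yields the inclusion $\Delta_{\C(X)}\subset\bigcap_{U\in\U}\C[U]$, which is one half of (BS4).

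For (BS1), given $U,V\in\U$ set $W=U\cap V\in\U$; since $W(S)\subset U(S)$ and $W(S)\subset V(S)$ for every $S\subset X$, any pair $(A,B)\in\C[W]$ satisfies $A\subset W(B)\subset U(B)$, $B\subset W(A)\subset U(A)$, and likewise with $V$ in place of $U$, so $\C[W]\subset\C[U]\cap\C[V]$. For (BS2) there is nothing to prove: the defining condition of $\C[U]$ is symmetric in its two arguments, so $\C[U]^{-1}=\C[U]$, and one takes $V=U$.

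The one step that deserves care is (BS3). Given $U\in\U$, use (U3) to pick $V\in\U$ with $V\circ V\subset U$. If $(A,B)\in\C[V]\circ\C[V]$, say $(A,C),(C,B)\in\C[V]$ for some $C\in\C(X)$, then $A\subset V(C)$, $C\subset V(B)$, and monotonicity of the operator $S\mapsto V(S)$ gives $A\subset V(C)\subset V(V(B))=(V\circ V)(B)\subset U(B)$; symmetrically $B\subset U(A)$. Hence $(A,B)\in\C[U]$ and $\C[V]^{2}\subset\C[U]$. Here one must simply note the routine identity $V(V(S))=(V\circ V)(S)$, valid for arbitrary relations, so that the relational composition and the iterated set-operator agree.

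It remains to prove the reverse inclusion $\bigcap_{U\in\U}\C[U]\subset\Delta_{\C(X)}$ in (BS4). Suppose $A\neq B$ in $\C(X)$; without loss of generality there is $a\in A\setminus B$. As $B$ is closed in $(X,\tau(\U))$ and $a\in X\setminus B$, there is $U\in\U$ with $U(a)\subset X\setminus B$, and replacing $U$ by the symmetric entourage $U\cap U^{-1}\in\U$ we may assume $U=U^{-1}$; then $a\in U(B)$ would give some $b\in B$ with $(b,a)\in U$, hence $(a,b)\in U$, i.e.\ $b\in U(a)\cap B$, contradicting $U(a)\subset X\setminus B$. Therefore $A\not\subset U(B)$, so $(A,B)\notin\C[U]$, which completes (BS4). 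I do not anticipate a genuine obstacle: the argument is a direct transcription of the definitions, the only delicate points being the identification of $V(V(S))$ with $(V\circ V)(S)$ in (BS3) and the symmetrization together with the appeal to the topology $\tau(\U)$ in (BS4).
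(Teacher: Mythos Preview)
Your argument is correct: each of (BS1)--(BS4) is verified cleanly, and the two points you flag as delicate---the identity $V(V(S))=(V\circ V)(S)$ and the use of closedness of $B$ together with symmetrization for (BS4)---are handled properly. Note, however, that the paper does not supply its own proof of this proposition: it is quoted as a known result with citations to \cite{Bourbaki, Michael}, so there is no in-paper argument to compare against; your direct verification of the base axioms is the standard route and is essentially what one finds in those references.
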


\cite[Example 2.10]{Michael} shows that the topology induced by the uniformity $\C(\U)$ on $\C(X)$ can be strictly finer than the Vietoris topology induced by $\tau(\U)$ on $\C(X)$. 

It is worth mentioning that a metric space $(X,d)$ has a compatible uniformity defined in a natural way. Indeed, it is routine to verify that the family $\U_d$, defined as the family of all sets 
$$V_\epsilon=\{(x,y)\in X\times X: d(x,y)<\epsilon\}$$
\noindent for each $\varepsilon>0$, is a base for a compatible uniformity on $(X,d)$. 
If $A,B\in \K(X)$, the \textit{Hasudorff distance} between $A$ and $B$ is defined as $d_H(A,B)= \mathrm{max}\{d(A,B), d(B,A) \}$, where $d(A,B)= \mathrm{sup} \{d(a,B):a\in A \}$ and $d(a,B)=\mathrm{inf}\{d(a,b):b\in B \}$. Notice that $d_H(A,B)$ is finite, since $A, B \in \K(X)$.

Given a metric space $(X,d)$, it is possible to define a metric $d_\infty$ on $\F(X)$ from the Hausdorff metric $d_H$ on $\K(X)$ as 
$$d_\infty (u,v)=\sup \{d_H (u_\alpha, v_\alpha): \alpha \in \I \}.$$
\noindent The metric $d_\infty$ is called the \textit{level-wise metric}. The symbol $\tau_\infty$ stands for the topology induced by $d_\infty$. 

Given a uniform space $(X,\U)$ and $U \in \U$, we define the set:
$$\F[U]=\{(u,v)\in \F(X)\times \F(X): (u_\alpha, v_\alpha)\in \K[U],\, \mathrm{~for~all~} \alpha \in \I \}.$$
\begin{proposition}{\rm \cite{JSS23}}
If $(X,\U)$ is a uniform space, then $\{\F[U]: U\in \U\}$ is a base for a uniformity  on  $\F(X)$.
\end{proposition}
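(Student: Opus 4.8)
The plan is to deduce the four base axioms (BS1)--(BS4) for the family $\{\F[U] : U\in\U\}$ from the already recorded fact that $\{\K[U] : U\in\U\}$ is a base for a uniformity $\K(\U)$ on $\K(X)$. The bridge is the observation that, for every $u\in\F(X)$ and every $\alpha\in\I$, the level set $u_\alpha$ belongs to $\K(X)$: it is non-empty because $u$ is normal, and it is a closed subset of the compact support $u_0$ because $u$ is upper semicontinuous; hence the requirement $(u_\alpha,v_\alpha)\in\K[U]$ is meaningful and $\F[U]$ is literally ``$\K[U]$ holds at every level''. Throughout I will freely use the elementary facts that $A\mapsto U(A)$ is monotone, that $V(V(A))=V^2(A)$, and that $\Delta_X\sub U$ forces $A\sub U(A)$ for all $A\sub X$.

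First I would dispose of (BS1). Since $\Delta_X\sub U$ we get $A\sub U(A)$ for every $A$, whence $\Delta_{\F(X)}\sub\F[U]$; in particular each $\F[U]$ is an entourage of the diagonal of $\F(X)$. For the intersection property, note that $U\mapsto\F[U]$ is monotone and that $(U\cap V)(A)\sub U(A)\cap V(A)$, so $\K[U\cap V]\sub\K[U]\cap\K[V]$, and reading this at every level gives $\F[U\cap V]\sub\F[U]\cap\F[V]$, with $U\cap V\in\U$ by (U2). Next, (BS2) is immediate once one observes that the defining condition ``$A\sub U(B)$ and $B\sub U(A)$'' of $\K[U]$ is symmetric in $A$ and $B$; hence $\K[U]^{-1}=\K[U]$, and reading this at every level yields $\F[U]^{-1}=\F[U]$, so (BS2) holds with the same entourage.

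For (BS3), given $U\in\U$ I would use (U3) to pick $V\in\U$ with $V^2\sub U$, and then show $\F[V]\circ\F[V]\sub\F[U]$. If $(u,v),(v,w)\in\F[V]$, then for each $\alpha\in\I$ we have $u_\alpha\sub V(v_\alpha)$, $v_\alpha\sub V(w_\alpha)$, $w_\alpha\sub V(v_\alpha)$ and $v_\alpha\sub V(u_\alpha)$; using monotonicity and $V(V(A))=V^2(A)$ this gives $u_\alpha\sub V^2(w_\alpha)\sub U(w_\alpha)$ and $w_\alpha\sub V^2(u_\alpha)\sub U(u_\alpha)$, i.e.\ $(u_\alpha,w_\alpha)\in\K[U]$; as $\alpha$ was arbitrary, $(u,w)\in\F[U]$. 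Finally, for (BS4) the inclusion $\Delta_{\F(X)}\sub\bigcap_{U\in\U}\F[U]$ was already noted; conversely, if $(u,v)\in\bigcap_{U\in\U}\F[U]$, then for every $\alpha\in\I$ we have $(u_\alpha,v_\alpha)\in\bigcap_{U\in\U}\K[U]=\Delta_{\K(X)}$ (axiom (U5) for $\K(\U)$), so $u_\alpha=v_\alpha$ for all $\alpha\in\I$; since $u(x)=\sup\{\alpha\in(0,1]:x\in u_\alpha\}$ with the convention $\sup\emptyset=0$, and likewise for $v$, it follows that $u=v$.

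I do not anticipate a genuine obstacle: the whole argument is a routine level-by-level transfer of the already-established properties of $\K(\U)$ to $\F(X)$. The only points that deserve a moment's care are the composition identity $V(V(A))=V^2(A)$ used in (BS3) and, in (BS4), the reconstruction of a fuzzy set from its family of level sets (which also makes the nominally separate case $\alpha=0$ automatic, since $u_0=\overline{\bigcup_{\alpha>0}u_\alpha}$); both are standard.
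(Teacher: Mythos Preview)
Your proof is correct. The paper does not supply its own proof of this proposition---it is quoted from \cite{JSS23} without argument---so there is nothing to compare against; your level-by-level transfer of (BS1)--(BS4) from $\K(\U)$ to $\F(X)$ is exactly the natural route, and each verification goes through as written.
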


We will denote by $\U_\infty$ the uniformity on $\F(X)$ defined in the previous proposition. It is called the \emph{level-wise uniformity}.

\begin{proposition}{\rm \cite{JSS23}}
If  $(X,d)$ is a metric space and $\U_d$ is the natural uniformity induced by $d$,  then the uniformity induced by $d_\infty$ on $\F(X)$ is the uniformity  $(\U_d)_\infty$.\end{proposition}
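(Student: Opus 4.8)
The plan is to write down a base for each of the two uniformities on $\F(X)$ and then show that these bases refine one another. Recall that the uniformity induced by the metric $d_\infty$ admits as a base the family of sets
$$W_\epsilon=\{(u,v)\in \F(X)\times \F(X): d_\infty(u,v)<\epsilon\},\qquad \epsilon>0,$$
whereas, by the preceding proposition, $\{\F[U]:U\in \U_d\}$ is a base for $(\U_d)_\infty$. Since $\{V_\epsilon:\epsilon>0\}$ is a base for $\U_d$ and the assignment $U\mapsto \F[U]$ is obviously monotone under inclusion, I would first note that $\{\F[V_\epsilon]:\epsilon>0\}$ is again a base for $(\U_d)_\infty$. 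Hence the problem reduces to comparing the entourages $W_\epsilon$ with the entourages $\F[V_\epsilon]$.

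Next I would carry out the elementary identification of $\K[V_\epsilon]$ with the usual Hausdorff-metric entourage. For $A,B\in \K(X)$ the condition $A\sub V_\epsilon(B)$ is equivalent to $d(a,B)<\epsilon$ for every $a\in A$; the nontrivial direction uses that $a\mapsto d(a,B)$ is $1$-Lipschitz and hence attains its supremum on the compact set $A$, so that $d(A,B)=\max_{a\in A}d(a,B)<\epsilon$. Therefore $\K[V_\epsilon]=\{(A,B)\in\K(X)\times\K(X):d_H(A,B)<\epsilon\}$, and consequently
$$\F[V_\epsilon]=\{(u,v)\in \F(X)\times\F(X): d_H(u_\alpha,v_\alpha)<\epsilon \text{ for every } \alpha\in \I\};$$
here one uses that each $\alpha$-cut $u_\alpha$ is compact, being a closed subset of the compact support $u_0$.

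The final step is to verify the two inclusions between the bases. The inclusion $W_\epsilon\sub \F[V_\epsilon]$ is immediate, since $d_\infty(u,v)<\epsilon$ forces $d_H(u_\alpha,v_\alpha)\le d_\infty(u,v)<\epsilon$ for every $\alpha$. For the opposite direction I would prove $\F[V_{\epsilon/2}]\sub W_\epsilon$: if $d_H(u_\alpha,v_\alpha)<\epsilon/2$ for all $\alpha$, then $d_\infty(u,v)=\sup_{\alpha\in\I}d_H(u_\alpha,v_\alpha)\le \epsilon/2<\epsilon$. Together these two facts show that the families $\{W_\epsilon:\epsilon>0\}$ and $\{\F[V_\epsilon]:\epsilon>0\}$ are mutually cofinal in the inclusion order, so they generate the same uniformity, which is therefore $(\U_d)_\infty$.

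I expect the only genuinely delicate point to be the handling of the supremum over the index $\alpha\in\I$. One should not try to prove the literal equality $\F[V_\epsilon]=W_\epsilon$: membership in $\F[V_\epsilon]$ only guarantees that each level distance $d_H(u_\alpha,v_\alpha)$ is \emph{strictly} below $\epsilon$, while the supremum of these numbers over $\alpha$ may well be equal to $\epsilon$, so the passage through $\epsilon/2$ in one of the inclusions is really necessary. The companion subtlety is that compactness of the $\alpha$-cuts is exactly what lets one upgrade ``strict inequality at every point of $A$'' to ``strict inequality of $d(A,B)$'' in the description of $\K[V_\epsilon]$; this is the single place where the hypotheses are used in an essential way.
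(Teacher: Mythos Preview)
Your argument is correct. The paper itself does not supply a proof of this proposition---it is simply quoted from \cite{JSS23}---so there is no in-paper argument to compare against; the route you take (identify $\K[V_\epsilon]$ with the strict Hausdorff $\epsilon$-entourage using compactness, then show the bases $\{W_\epsilon\}$ and $\{\F[V_\epsilon]\}$ are mutually cofinal via $W_\epsilon\subseteq\F[V_\epsilon]$ and $\F[V_{\epsilon/2}]\subseteq W_\epsilon$) is the standard one and is carried out cleanly, including the correct observation that one should not expect $\F[V_\epsilon]=W_\epsilon$ on the nose.
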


The following result plays an important role in the proof of Theorem \ref{Th:Conti-U_infinito}.

\begin{lemma}\label{Le:compact}
Let $K$ be a compact subset of a uniform space $(X, \U)$ and $f: X \to X$ a continuous function. Then for every $U \in \U$, there exists $V \in \U$ such that if $(x,y) \in V\cap (K \times X)$, then $(f(x),f(y)) \in U$.
\end{lemma}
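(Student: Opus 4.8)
The plan is to run the classical ``continuous maps are uniformly continuous on compacta'' argument, transported to the entourage formalism, with the compactness of $K$ supplying the finite subcover. The only real care needed is that $f$ is merely continuous for $\tau(\U)$ (it need not be uniformly continuous on all of $X$), so the localization must be anchored at the first coordinate, which is exactly why the statement restricts to pairs in $V\cap(K\times X)$.

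First I would fix $U\in\U$ and, using (U1)--(U4), choose a \emph{symmetric} $W\in\U$ with $W^{2}\sub U$: pick $W_{0}\in\U$ with $W_{0}^{2}\sub U$ by (U3) and replace it by $W_{0}\cap W_{0}^{-1}$. Next, for each $x\in K$ the set $W(f(x))$ is a $\tau(\U)$-neighborhood of $f(x)$, so continuity of $f$ at $x$ provides an entourage $V_{x}'\in\U$ with $f\bigl(V_{x}'(x)\bigr)\sub W(f(x))$; then I would take a symmetric $V_{x}\in\U$ with $V_{x}^{2}\sub V_{x}'$, so that $f\bigl(V_{x}^{2}(x)\bigr)\sub W(f(x))$. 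The interiors of the neighborhoods $V_{x}(x)$, $x\in K$, form an open cover of $K$, and by compactness finitely many of them, say for $x_{1},\dots,x_{n}\in K$, already cover $K$. Put $V=\bigcap_{i=1}^{n}V_{x_{i}}$, which is an entourage by (U2).

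To verify that $V$ works, take $(x,y)\in V\cap(K\times X)$. Since $x\in K$, there is an index $i$ with $x\in V_{x_{i}}(x_{i})$, i.e.\ $(x_{i},x)\in V_{x_{i}}$; combining this with $(x,y)\in V\sub V_{x_{i}}$ gives $(x_{i},y)\in V_{x_{i}}^{2}$, hence $y\in V_{x_{i}}^{2}(x_{i})$, and of course $x\in V_{x_{i}}(x_{i})\sub V_{x_{i}}^{2}(x_{i})$ as well (using $\Delta_{X}\sub V_{x_{i}}$). Therefore both $f(x)$ and $f(y)$ lie in $f\bigl(V_{x_{i}}^{2}(x_{i})\bigr)\sub W(f(x_{i}))$, so $(f(x_{i}),f(x))\in W$ and $(f(x_{i}),f(y))\in W$; since $W$ is symmetric, $(f(x),f(x_{i}))\in W$, and thus $(f(x),f(y))\in W^{2}\sub U$, as desired.

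The step I expect to be the crux is keeping track of which coordinate is anchored in the compact set: the finite subcover controls a point only when it already lies in $K$, so one must pass through the composition $V_{x_{i}}^{2}$ to reach the unconstrained point $y$, which forces the choice of $V_{x}$ with $V_{x}^{2}\sub V_{x}'$ and of a symmetric $W$ with $W^{2}\sub U$ to close the triangle $f(x)\,W\,f(x_{i})\,W\,f(y)$. Everything else is routine manipulation of the uniformity axioms.
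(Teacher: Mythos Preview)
Your proof is correct and follows essentially the same route as the paper's own argument: choose a half-entourage $W$ with $W^{2}\sub U$, localize continuity at each point of $K$, refine to $V_{x}$ with $V_{x}^{2}$ inside the local entourage, extract a finite subcover by compactness, and intersect. If anything, your version is slightly more careful---you explicitly take $W$ symmetric to justify the step $(f(x),f(x_{i}))\in W$ and you pass to interiors to obtain a genuine open cover---whereas the paper leaves these points implicit.
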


\begin{proof}
Take $U \in \U$ and $U_1 \in \U$ such that $U^{2}_1 \sub U$. By the continuity of $f$, for every $x \in K$, there exists $W_x \in \U$ such that $f(W_x(x)) \sub U_1(f(x))$. For every $x \in K$, we choose a symmetric $V_x \in \U$ such that $V^{2}_x \sub W_x$.
 Since $K$ is compact, we can find $x_1, x_2,..., x_n \in K$ with $K \sub \bigcup^n_{i=1}V_{x_i}(x_i)$. Put $V=\bigcap^n_{i=1}V_{x_i}$. Take $x \in K$ and $y \in X$ such that $(x,y) \in V$. Then $x \in V_{x_i}(x_i) \sub W_{x_i}(x_i)$ for some $i=1,...,n$. So $(x_i,x) \in V_{x_i}$ and $(f(x), f(x_i)) \in U_1$. We also have that $(x,y) \in V \sub V_{x_i}$.  So $(x_i, y) \in V^{2}_{x_i} \sub W_{x_i}$. Hence, $y \in W_{x_i}(x_i)$. So $(f(x_i),f(y)) \in U_1$. We conclude that $(f(x), f(y)) \in U^{2}_1 \sub U$.
\end{proof}

\begin{theorem}\label{Th:Conti-U_infinito}
Let $(X, \U)$ be a uniform space. If $f: (X, \U) \to (X, \U)$ is continuous, then $\f: (\F(X), \U_{\infty}) \to (\F(X), \U_{\infty})$ is continuous.
\end{theorem}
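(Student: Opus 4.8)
The plan is to reduce continuity of $\widehat{f}$ with respect to $\U_\infty$ to a single application of Lemma~\ref{Le:compact}, using the level-wise description of $\widehat{f}$ from Proposition~\ref{Prop:Zadeh's_level} together with the description of the basic entourages $\F[U]$. Fix $u\in\F(X)$ and a basic entourage $\F[U]$ of $\widehat{f}(u)$; we must produce a basic entourage $\F[V]$ of $u$ such that $(u,v)\in\F[V]$ implies $(\widehat{f}(u),\widehat{f}(v))\in\F[U]$. Since $u$ has compact support $u_0$, every $\alpha$-cut $u_\alpha$ lies inside the single compact set $K:=u_0$. So I would apply Lemma~\ref{Le:compact} to this $K$ and to $U$ (after first shrinking $U$ to a symmetric $U_1$ with $U_1^2\subseteq U$, which is convenient for turning one-sided inclusions into two-sided ones), obtaining $V\in\U$ such that $(x,y)\in V\cap(K\times X)$ forces $(f(x),f(y))\in U_1$.

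The key computation is then: if $(u,v)\in\F[V]$, i.e. $(u_\alpha,v_\alpha)\in\K[V]$ for all $\alpha\in\I$, then for each $\alpha$ we have $u_\alpha\subseteq V(v_\alpha)$ and $v_\alpha\subseteq V(u_\alpha)$, and we want $(f(u_\alpha),f(v_\alpha))\in\K[U]$. For the inclusion $f(u_\alpha)\subseteq U(f(v_\alpha))$: take $x\in u_\alpha\subseteq K$; since $u_\alpha\subseteq V(v_\alpha)$, there is $y\in v_\alpha$ with $(y,x)\in V$; but I need $(x,y)$ or $(y,x)$ in $V\cap(K\times X)$ with the $K$-coordinate being the one Lemma~\ref{Le:compact} expects — this is the one spot requiring care about the order of arguments, which I would handle by also choosing $V$ symmetric (the lemma's hypothesis is stated as $(x,y)\in V\cap(K\times X)$ with $x\in K$, and $x\in u_\alpha\subseteq K$ here, so $(x,y)\in V$ with $x\in K$ gives $(f(x),f(y))\in U_1\subseteq U$ directly, placing $f(x)\in U(f(v_\alpha))$). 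For the reverse inclusion $f(v_\alpha)\subseteq U(f(u_\alpha))$: take $y\in v_\alpha\subseteq V(u_\alpha)$, so there is $x\in u_\alpha\subseteq K$ with $(x,y)\in V$; again $x\in K$ so Lemma~\ref{Le:compact} yields $(f(x),f(y))\in U_1\subseteq U$, hence $f(y)\in U(f(u_\alpha))$. Finally, by Proposition~\ref{Prop:Zadeh's_level}, $f(u_\alpha)=[\widehat{f}(u)]_\alpha$ and $f(v_\alpha)=[\widehat{f}(v)]_\alpha$, so $([\widehat{f}(u)]_\alpha,[\widehat{f}(v)]_\alpha)\in\K[U]$ for every $\alpha$, i.e. $(\widehat{f}(u),\widehat{f}(v))\in\F[U]$, as required.

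The main obstacle I anticipate is bookkeeping rather than mathematical depth: ensuring that the compact set fed to Lemma~\ref{Le:compact} (the support $u_0$) genuinely contains all the $\alpha$-cuts $u_\alpha$ — which holds since $u_\alpha\subseteq u_0$ for $\alpha\in(0,1]$ and the $\alpha$-cut at $\alpha=0$ in the definition of $\F[U]$ should be read consistently with how the paper indexes (here the cuts relevant to $\F[U]$ range over $\alpha\in\I$, and for $\alpha=0$ one takes $u_0$ itself) — and keeping the direction of the entourage arguments straight when converting $\K[V]$-membership into $\K[U]$-membership after applying $f$. Choosing $U_1$ symmetric with $U_1^2\subseteq U$ and $V$ symmetric neutralizes the asymmetry, so no genuine difficulty remains. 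I would also remark that one uses continuity of $f$ only through Lemma~\ref{Le:compact} and Proposition~\ref{Prop:Zadeh's_level}, both of which require $X$ Hausdorff (Tychonoff), which is automatic here since the topology comes from a uniformity.
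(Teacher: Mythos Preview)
Your proof is correct and follows essentially the same approach as the paper: both reduce to a single application of Lemma~\ref{Le:compact} with $K=u_0$, then verify the two inclusions defining $\K[U]$-closeness of $f(u_\alpha)$ and $f(v_\alpha)$ for every $\alpha$, and invoke Proposition~\ref{Prop:Zadeh's_level} to rewrite these as the $\alpha$-cuts of $\widehat{f}(u)$ and $\widehat{f}(v)$. If anything, you are slightly more careful than the paper about the symmetry of $V$ and the order of the entourage arguments (and your extra shrinkage $U_1^2\subseteq U$ is harmless though not actually needed---a symmetric $U_1\subseteq U$ already suffices).
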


\begin{proof}
Suppose that $f: (X, \U) \to (X, \U)$ is continuous. Take $u \in \F(X)$ and $U \in \U$. Pick a symmetric $W \in \U$ such that $W \sub U$. By Lemma \ref{Le:compact}, there exists $V \in \U$ such that $x \in u_0=K$, $y \in X$, and $(x,y) \in V$ imply $(f(x), f(y)) \in W$. Let us show that $\f(\F[V](u)) \sub \F[U](\f(u))$. Indeed, if $v \in \F[V](u)$, then $(u_\alpha, v_\alpha)\in \K[V]$ for each $\alpha \in [0,1]$, i.e., $u_\alpha \sub V(v_\alpha)$ and $v_\alpha \sub V(u_\alpha)$ for each $\alpha \in [0,1]$. Fix $\alpha \in [0,1]$. If $x \in u_\alpha \sub K$, there exists $y \in v_{\alpha}$ such that $(x,y) \in V$. By the choice of $V$, we can conclude that $(f(x), f(y)) \in W$. So $f(x) \in W(f(y)) \sub W(f(v_\alpha))$. Hence $f(u_\alpha) \sub W(f(v_\alpha))$. Now if $y \in u_\alpha$, there exists $x \in u_{\alpha} \sub K$ such that $(x,y) \in V$. By the choice of $V$, we can conclude that $(f(x), f(y)) \in W$. So $f(v_\alpha) \sub W(f(u_\alpha))$. It follows that $(f(u_\alpha), f(v_\alpha)) \in \K[W]$. By Proposition \ref{Prop:Zadeh's_level}, the latter fact is equivalent to $([\f(u)]_\alpha, [\f(v)]_\alpha) \in \K[W]$. We have thus proved that $\f(v) \in \F[U](\f(u))$. This completes the proof.
\end{proof}

We now turn our attention to the Skorokhod uniformity. 

The symbol $\mathbb{T}$ denotes the family of all increasing homeomorphisms from $[0,1]$ onto itself. Given $t \in \mathbb{T}$, we define $\Vert t\Vert=\sup\{|t(\alpha)-\alpha|: \alpha \in \I\}$. 

Let $(X,d)$ be a metric space. The \textit{Skorokhod metric} on $\F(X)$ is defined as  
{\small $$d_0(u,v)=\inf\{\epsilon: \text{there exists }  t \in \mathbb{T}\, \text{ such that }\, \Vert t\Vert \leq \epsilon\, \text{ and }\, d_{\infty}(u,tv)\leq \epsilon\},$$}
\noindent for all $u,v\in \F(X)$.

Let $(X,\U)$ be a uniform space. If $U \in \U$ and $\epsilon >0$, we consider the set 
\begin{flushleft}
$G[U,\epsilon ]=\{(u,v)\in \F(X)\times \F(X):$ 
\end{flushleft}
\begin{flushright}
$\mathrm{exists~} t\in \mathbb{T} \mathrm{~with~} \Vert t\Vert <\epsilon  \mathrm{~ and ~} (u_\alpha, v_{t(\alpha )})\in \K[U] \mathrm{~for~all~} \alpha \in \I \}$.
\end{flushright}

\begin{proposition}{\rm \cite{JSS23}}
If $(X,\U)$ is a uniform space, then $\{G[U,\epsilon]: U\in \U \mathrm{~and~} \epsilon >0\}$ is a base for a uniformity  on  $\F(X)$.
\end{proposition}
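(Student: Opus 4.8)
The plan is to verify that $\mathcal{B}=\{G[U,\epsilon]:U\in\U,\ \epsilon>0\}$ satisfies the four conditions (BS1)--(BS4) of the base-characterization proposition. I would first isolate the two elementary facts about the relations $\K[U]$ on which everything rests: (i) $\K[U]$ is symmetric in its two arguments, i.e. $(A,B)\in\K[U]$ iff $(B,A)\in\K[U]$, directly from the definition; and (ii) if $U_1^{2}\subseteq U$ then $\K[U_1]\circ\K[U_1]\subseteq\K[U]$, which follows from the monotonicity of $U_1(\cdot)$ together with $U_1(U_1(C))=U_1^{2}(C)$ for $C\subseteq X$. On the side of $\mathbb{T}$ I would record the norm estimates $\Vert t^{-1}\Vert=\Vert t\Vert$ (since $\beta=t^{-1}(\alpha)$ runs over $[0,1]$ as $\alpha$ does and $|t^{-1}(\alpha)-\alpha|=|\beta-t(\beta)|$) and $\Vert s\circ t\Vert\le\Vert s\Vert+\Vert t\Vert$ (triangle inequality after inserting $\pm\, t(\alpha)$). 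Note also that each $G[U,\epsilon]$ contains $\Delta_{\F(X)}$, witnessed by $t=\id$ together with $\Delta_{\K(X)}\subseteq\K[U]$.

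With these in hand, (BS1)--(BS3) are routine. For (BS1): given $G[U_1,\epsilon_1]$ and $G[U_2,\epsilon_2]$, the element $G[U_1\cap U_2,\min\{\epsilon_1,\epsilon_2\}]$ is contained in their intersection, since $\K[U_1\cap U_2]\subseteq\K[U_1]\cap\K[U_2]$. For (BS2): I would show that each $G[U,\epsilon]$ is \emph{symmetric}; indeed, if $(v,u)\in G[U,\epsilon]$ via $t$, then swapping the arguments in each $(v_\alpha,u_{t(\alpha)})\in\K[U]$ by fact (i) and reparametrizing by $\alpha\mapsto t^{-1}(\beta)$ shows $(u,v)\in G[U,\epsilon]$ via $t^{-1}$, whose norm is again $<\epsilon$; hence $G[U,\epsilon]^{-1}=G[U,\epsilon]$. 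For (BS3): choose $U_1\in\U$ with $U_1^{2}\subseteq U$ and check $G[U_1,\epsilon/2]^{2}\subseteq G[U,\epsilon]$; if $(u,v)\in G[U_1,\epsilon/2]$ via $t$ and $(v,w)\in G[U_1,\epsilon/2]$ via $s$, put $r=s\circ t\in\mathbb{T}$, so that $\Vert r\Vert<\epsilon$, and composing $(u_\alpha,v_{t(\alpha)})\in\K[U_1]$ with $(v_{t(\alpha)},w_{r(\alpha)})\in\K[U_1]$ gives, by fact (ii), $(u_\alpha,w_{r(\alpha)})\in\K[U]$ for every $\alpha$, hence $(u,w)\in G[U,\epsilon]$.

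The heart of the proof is (BS4): $\bigcap\mathcal{B}=\Delta_{\F(X)}$. The inclusion $\supseteq$ was already observed, so the work is to show that $(u,v)\in G[U,\epsilon]$ for all $U\in\U$ and $\epsilon>0$ forces $u=v$; since an upper semicontinuous fuzzy set is recovered from its cuts through $u(x)=\sup\{\alpha\in(0,1]:x\in u_\alpha\}$, it suffices to prove $u_{\alpha_0}=v_{\alpha_0}$ for every $\alpha_0\in(0,1]$. Fixing such $\alpha_0$, a point $x\in u_{\alpha_0}$, a level $\beta\in(0,\alpha_0)$ and an entourage $U\in\U$, I would choose $\epsilon<\alpha_0-\beta$ and use $(u,v)\in G[U,\epsilon]$ to obtain $t\in\mathbb{T}$ with $\Vert t\Vert<\epsilon$ and $(u_{\alpha_0},v_{t(\alpha_0)})\in\K[U]$; then $t(\alpha_0)>\alpha_0-\epsilon>\beta$, hence $v_{t(\alpha_0)}\subseteq v_\beta$, so $x\in u_{\alpha_0}\subseteq U(v_{t(\alpha_0)})\subseteq U(v_\beta)$. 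As $U$ was arbitrary, $x\in\bigcap_{U\in\U}U(v_\beta)$, which equals the closure of $v_\beta$ in $(X,\tau(\U))$ (using that the symmetric entourages form a base of $\U$), and that closure is $v_\beta$ because $v_\beta$ is compact, hence closed. Thus $x\in v_\beta$ for every $\beta\in(0,\alpha_0)$, whence $x\in\bigcap_{0<\beta<\alpha_0}v_\beta=v_{\alpha_0}$ by upper semicontinuity of $v$. This proves $u_{\alpha_0}\subseteq v_{\alpha_0}$; the reverse inclusion follows by repeating the argument with $u$ and $v$ interchanged, which is legitimate since each $G[U,\epsilon]$ is symmetric. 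Hence $u=v$. I expect (BS4) to be the only genuine obstacle: it is the single point where the structure of fuzzy sets enters, combining the left-continuity identity $v_{\alpha_0}=\bigcap_{0<\beta<\alpha_0}v_\beta$ from upper semicontinuity, the closedness of the compact cuts in $\tau(\U)$, and the convergence $t(\alpha_0)\to\alpha_0$ forced by $\Vert t\Vert\to0$; conditions (BS1)--(BS3) reduce to bookkeeping with the two properties of $\K[U]$ and the norm estimates on $\mathbb{T}$.
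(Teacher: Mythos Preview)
The paper does not prove this proposition; it is quoted from \cite{JSS23} without argument, so there is no in-paper proof to compare against. Your verification of (BS1)--(BS4) is correct and is exactly the natural route. Two small comments. First, in (BS4) the identity $\bigcap_{0<\beta<\alpha_0} v_\beta = v_{\alpha_0}$ does not require upper semicontinuity of $v$: it holds for any function $v\colon X\to[0,1]$ simply because $v(x)\ge\beta$ for all $\beta<\alpha_0$ forces $v(x)\ge\alpha_0$. The place where the hypotheses on $v$ actually enter is the step $\overline{v_\beta}=v_\beta$, which uses that $v_\beta$ is compact (here upper semicontinuity plus compact support). Second, your claim that $\K[U]$ is symmetric is correct even for non-symmetric $U$, since the defining condition $A\subseteq U(B)$ and $B\subseteq U(A)$ is invariant under swapping $A$ and $B$; this is worth stating explicitly because it is what makes (BS2) trivial.
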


The uniformity on $\F(X)$ defined in the previous proposition will be denoted by $\mathcal{U}_0$, and it is called the \emph{Skorokhod uniformity}. We have the following relation between the Skorokhod metric and the Skorokhod uniformity.

\begin{proposition}{\rm \cite{JSS23}}
If $(X,d)$ is a metric space, then the uniformity $\U_{d_0}$ on $\F(X)$ coincides with the uniformity $(\U_d)_0$. \end{proposition}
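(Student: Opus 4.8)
The plan is to show that $\U_{d_0}$ and $(\U_d)_0$ refine each other by comparing bases. As the uniformity of the metric $d_0$, the family $W_\eta=\{(u,v)\in\F(X)\times\F(X):d_0(u,v)<\eta\}$, $\eta>0$, is a base for $\U_{d_0}$. For $(\U_d)_0$, the preceding JSS23 proposition gives the base $\{G[U,\epsilon]:U\in\U_d,\ \epsilon>0\}$; since $\{V_\epsilon:\epsilon>0\}$ is a base for $\U_d$ and both $U\mapsto\K[U]$ and $U\mapsto G[U,\epsilon]$ are monotone, the smaller family $\{G[V_\delta,\epsilon]:\delta,\epsilon>0\}$ is again a base for $(\U_d)_0$. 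So it suffices to compare the $W_\eta$'s with the $G[V_\delta,\epsilon]$'s.

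The computational input I would record first is
\[\K[V_\epsilon]=\{(A,B)\in\K(X)\times\K(X):d_H(A,B)<\epsilon\}.\]
Indeed $V_\epsilon(B)=\{x\in X:d(x,B)<\epsilon\}$, so $A\sub V_\epsilon(B)$ means precisely $d(a,B)<\epsilon$ for every $a\in A$; as $A$ is compact and $a\mapsto d(a,B)$ is continuous, this is equivalent to $d(A,B)=\max_{a\in A}d(a,B)<\epsilon$. Doing the same for $B\sub V_\epsilon(A)$ and taking the maximum yields the displayed identity. (Compactness of the $\alpha$-cuts is exactly what upgrades the inclusions to a strict inequality here; without it one only gets $d_H(A,B)\le\epsilon$.) In particular, $(u_\alpha,v_{t(\alpha)})\in\K[V_\epsilon]$ for all $\alpha$ is implied by $d_\infty(u,tv)<\epsilon$ and implies $d_\infty(u,tv)\le\epsilon$, where $(tv)_\alpha:=v_{t(\alpha)}$ as in the definition of $d_0$.

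Now the two inclusions. For $(\U_d)_0\sub\U_{d_0}$: fix $\delta,\epsilon>0$ and put $\eta=\min\{\delta,\epsilon\}$. If $d_0(u,v)<\eta$, then by definition of the infimum there are $\epsilon'<\eta$ and $t\in\mathbb{T}$ with $\Vert t\Vert\le\epsilon'$ and $d_\infty(u,tv)\le\epsilon'$; hence $\Vert t\Vert\le\epsilon'<\eta\le\delta$, and $d_H(u_\alpha,v_{t(\alpha)})\le d_\infty(u,tv)\le\epsilon'<\eta\le\epsilon$ for every $\alpha$, so $(u,v)\in G[V_\epsilon,\delta]$. Thus $W_\eta\sub G[V_\epsilon,\delta]$. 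For $\U_{d_0}\sub(\U_d)_0$: if $(u,v)\in G[V_{\eta/2},\eta/2]$, take the witnessing $t$; then $\Vert t\Vert<\eta/2$ and $d_H(u_\alpha,v_{t(\alpha)})<\eta/2$ for every $\alpha$, so $d_\infty(u,tv)\le\eta/2$, whence $\eta/2$ lies in the set over which the infimum defining $d_0(u,v)$ is taken, giving $d_0(u,v)\le\eta/2<\eta$ and $(u,v)\in W_\eta$. Thus $G[V_{\eta/2},\eta/2]\sub W_\eta$. By upward closure of uniformities, the first inclusion yields $(\U_d)_0\sub\U_{d_0}$ and the second yields $\U_{d_0}\sub(\U_d)_0$; together these give $\U_{d_0}=(\U_d)_0$.

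The only genuine delicacy is the bookkeeping between strict and non-strict inequalities — the infimum in the definition of $d_0$, the strict ``$<\epsilon$'' hidden in $\K[V_\epsilon]$, and the strict ``$\Vert t\Vert<\delta$'' in $G[V_\delta,\epsilon]$ — which is absorbed by the factor-$2$ and $\min$ choices above. Along the way one should also record the routine facts that, for $t\in\mathbb{T}$, the family $\alpha\mapsto v_{t(\alpha)}$ really is the cut family of an element $tv\in\F(X)$ (normality, compactness of the support, and left-continuity of the cut map all follow from $t$ being an increasing homeomorphism of $[0,1]$ fixing $0$ and $1$), and that $\Vert t\Vert=\Vert t^{-1}\Vert$, so it is immaterial whether the reparametrisation in $d_0$ and in $G[U,\epsilon]$ acts through $t$ or through $t^{-1}$.
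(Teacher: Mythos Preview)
Your argument is correct. Note, however, that the paper does not itself prove this proposition: it is quoted from \cite{JSS23} without proof, so there is no in-paper argument to compare against. Your approach --- comparing the basic entourages $W_\eta=\{(u,v):d_0(u,v)<\eta\}$ with the $G[V_\delta,\epsilon]$ via the identity $\K[V_\epsilon]=\{(A,B)\in\K(X)^2:d_H(A,B)<\epsilon\}$ --- is the natural one, and your bookkeeping of strict versus non-strict inequalities (the $\min\{\delta,\epsilon\}$ in one direction and the factor $1/2$ in the other) is exactly what is needed to make the two bases refine each other.
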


The following result plays an important role in the proof of Theorem \ref{Th:Conti-U_0}.

\begin{theorem}\label{Th:linearity}{\rm \cite{JSS}}
If $X$ is a Hausdorff space and $f:X \to X$ is continuous, then $\f(tu)=t \f(u)$ for each $u \in \F(X)$ and $t \in \mathbb{T}$.
\end{theorem}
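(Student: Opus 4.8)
The plan is to prove the identity cut by cut. Recall that a member of $\F(X)$ is completely determined by its family of $\alpha$-cuts, since normality and upper semicontinuity give $u(x)=\sup\{\alpha\in\I : x\in u_\alpha\}$, with the support recovered as $u_0=\Cl\big(\bigcup_{\alpha\in(0,1]}u_\alpha\big)$. Thus it suffices to show that $[\f(tu)]_\alpha=[t\f(u)]_\alpha$ for every $\alpha\in\I$, and then conclude that the two fuzzy sets coincide.

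First I would record how the reparametrization $t\in\mathbb{T}$ acts on cuts. By the convention underlying the Skorokhod constructions --- the one that makes $G[U,\epsilon]$ compare $u_\alpha$ with $v_{t(\alpha)}$ --- the action of $t$ on a fuzzy set is given on cuts by $(tv)_\alpha=v_{t(\alpha)}$ for every $\alpha$; since $t$ is an increasing homeomorphism of $[0,1]$ it fixes the endpoints, so $t(0)=0$ and $t(1)=1$, and the formula is valid across the whole range, including the support level $\alpha=0$.

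The heart of the argument is then two applications of Proposition \ref{Prop:Zadeh's_level}, combined with a reindexing. On one hand,
$$[\f(tu)]_\alpha = f\big((tu)_\alpha\big)=f\big(u_{t(\alpha)}\big),$$
using Proposition \ref{Prop:Zadeh's_level} for $tu$ and then the cut formula. On the other hand,
$$[t\f(u)]_\alpha = [\f(u)]_{t(\alpha)}=f\big(u_{t(\alpha)}\big),$$
using the cut formula for $t\f(u)$ and then Proposition \ref{Prop:Zadeh's_level} for $u$ at level $t(\alpha)$. Since both sides equal $f(u_{t(\alpha)})$ for every $\alpha\in\I$, the two fuzzy sets have identical cuts and hence are equal.

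The main point to be careful about is not a hard estimate but a well-definedness check: one must confirm that $tu$ and $t\f(u)$ genuinely belong to $\F(X)$ (normal, upper semicontinuous, with compact support) so that Proposition \ref{Prop:Zadeh's_level} legitimately applies to them, and that the cut identity $(tu)_\alpha=u_{t(\alpha)}$ holds uniformly in $\alpha$ rather than only on $(0,1]$. Once the endpoint behaviour $t(0)=0$, $t(1)=1$ is in hand, the substantive content reduces entirely to the commutation $[\f(\cdot)]_\alpha=f\big((\cdot)_\alpha\big)$ of Proposition \ref{Prop:Zadeh's_level} together with the reindexing $\alpha\mapsto t(\alpha)$, so I expect no genuine obstacle to remain.
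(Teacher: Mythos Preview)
The paper does not prove this theorem; it is quoted from \cite{JSS} and used as a black box in the proof of Theorem~\ref{Th:Conti-U_0}. So there is no in-paper argument to compare against.

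Your cut-by-cut strategy is the natural one and is essentially correct: once you know that $(tu)_\alpha$ is a reindexed $u$-cut and that Proposition~\ref{Prop:Zadeh's_level} gives $[\f(w)]_\alpha=f(w_\alpha)$ for any $w\in\F(X)$, both sides collapse to $f$ applied to the same cut of $u$. The well-definedness checks you flag are routine: since $t$ is an increasing homeomorphism of $\I$ with $t(0)=0$ and $t(1)=1$, the composition $t\circ u$ is again normal, upper semicontinuous, and has the same support as $u$, so $tu\in\F(X)$ and Proposition~\ref{Prop:Zadeh's_level} applies.

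One slip is worth correcting. The cut formula you wrote, $(tv)_\alpha=v_{t(\alpha)}$, is inverted. Under the composition convention $(tv)(x)=t(v(x))$ one has
\[
(tv)_\alpha=\{x:t(v(x))\ge\alpha\}=\{x:v(x)\ge t^{-1}(\alpha)\}=v_{t^{-1}(\alpha)},
\]
and indeed the paper's own proof of Theorem~\ref{Th:Conti-U_0} deduces ``$t^{-1}v\in\F[V](u)$'' from $(u_\alpha,v_{t(\alpha)})\in\K[V]$, which forces $(t^{-1}v)_\alpha=v_{t(\alpha)}$, i.e.\ $(sv)_\alpha=v_{s^{-1}(\alpha)}$. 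This inversion is harmless for your argument because you apply the same reindexing consistently on both sides, so the two expressions still agree; but your displayed computations should read $f(u_{t^{-1}(\alpha)})$ rather than $f(u_{t(\alpha)})$.
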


\begin{theorem}\label{Th:Conti-U_0}
Let $(X, \U)$ be a uniform space. If $f: (X, \U) \to (X, \U)$ is continuous, then $\f: (\F(X), \U_0) \to (\F(X), \U_0)$ is continuous.
\end{theorem}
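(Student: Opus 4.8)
\noindent\emph{Proof proposal.} The plan is to reduce $\U_0$-continuity of $\f$ to the $\U_\infty$-continuity already proved in Theorem~\ref{Th:Conti-U_infinito}, using Theorem~\ref{Th:linearity} to push a Skorokhod reparametrization through the Zadeh extension. Fix $u\in\F(X)$ and a basic entourage $G[U,\epsilon]$ of $\U_0$; it suffices to produce $V\in\U$ and $\delta>0$ with $\f\big(G[V,\delta](u)\big)\sub G[U,\epsilon](\f(u))$.

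First I would invoke Theorem~\ref{Th:Conti-U_infinito}: since $\f$ is continuous at $u$ with respect to $\U_\infty$ and $\{\F[V](u):V\in\U\}$ is a neighbourhood base at $u$ while $\F[U](\f(u))$ is a neighbourhood of $\f(u)$, there exists $V\in\U$ such that $\f\big(\F[V](u)\big)\sub\F[U](\f(u))$. Set $\delta=\epsilon$. The only bookkeeping point is that the action of $\mathbb T$ on $\F(X)$ rescales $\alpha$-cuts, so that for $v\in\F(X)$ and $t\in\mathbb T$ the fuzzy set $w=t^{-1}v$ has $w_\alpha=v_{t(\alpha)}$ for every $\alpha\in\I$; hence the relation ``$v\in G[V,\epsilon](u)$ via $t$'', i.e. $\Vert t\Vert<\epsilon$ and $(u_\alpha,v_{t(\alpha)})\in\K[V]$ for all $\alpha$, is exactly the relation $t^{-1}v\in\F[V](u)$ for a $t$ with $\Vert t\Vert<\epsilon$.

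Given $v\in G[V,\epsilon](u)$ with witness $t$, we thus have $t^{-1}v\in\F[V](u)$, so $\f(t^{-1}v)\in\F[U](\f(u))$ by the choice of $V$. Theorem~\ref{Th:linearity} (applied to the homeomorphism $t^{-1}\in\mathbb T$) gives $\f(t^{-1}v)=t^{-1}\f(v)$, whence $t^{-1}\f(v)\in\F[U](\f(u))$, that is, $\big([\f(u)]_\alpha,[\f(v)]_{t(\alpha)}\big)\in\K[U]$ for every $\alpha\in\I$. Since $\Vert t\Vert<\epsilon$, the same homeomorphism $t$ now witnesses $(\f(u),\f(v))\in G[U,\epsilon]$, i.e. $\f(v)\in G[U,\epsilon](\f(u))$. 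Therefore $\f\big(G[V,\epsilon](u)\big)\sub G[U,\epsilon](\f(u))$, so $\f$ is continuous at $u$, and as $u$ was arbitrary, $\f$ is continuous.

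I expect no serious obstacle here once Theorems~\ref{Th:Conti-U_infinito} and~\ref{Th:linearity} are available: the computational content is absorbed into Theorem~\ref{Th:Conti-U_infinito}, and the only thing requiring care is the reparametrization bookkeeping, namely identifying $t^{-1}v$ as the level-wise representative of the Skorokhod-entourage relation and using that $\mathbb T$ is a group closed under inverses with $\Vert t^{-1}\Vert=\Vert t\Vert$, so that the norm bound survives on both sides. If a self-contained argument is preferred, one can instead repeat verbatim the estimate in the proof of Theorem~\ref{Th:Conti-U_infinito} — choosing a symmetric $W\sub U$ and $V$ from Lemma~\ref{Le:compact} applied to $K=u_0$ — while simply carrying the fixed homeomorphism $t$ along in the cut indices.
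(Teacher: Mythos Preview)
Your proposal is correct and follows essentially the same route as the paper's own proof: both choose $V$ via Theorem~\ref{Th:Conti-U_infinito}, take $\delta=\epsilon$, translate the Skorokhod condition into $t^{-1}v\in\F[V](u)$, apply Theorem~\ref{Th:linearity}, and use the same $t$ to witness $(\f(u),\f(v))\in G[U,\epsilon]$. Your added remarks on the reparametrization bookkeeping and on $\Vert t^{-1}\Vert=\Vert t\Vert$ are helpful but not strictly needed, since the argument reuses the original $t$ throughout.
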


\begin{proof}
Suppose that $f: (X, \U) \to (X, \U)$ is continuous. Take $U \in \U$, $\epsilon>0$, and $u \in \F(X)$. By Theorem \ref{Th:Conti-U_infinito}, there exists $V \in \U$ such that $\f(\F[V](u)) \sub \F[U](\f(u))$. Pick $v \in G[V,\epsilon ](u)$. Then there exists $t \in \mathbb{T}$ such that $\Vert t\Vert <\epsilon$ and $(u_\alpha, v_{t(\alpha )})\in \K[V]$ for all $\alpha \in [0,1]$. Hence $t^{-1}v \in \F[V](u)$. By the choice of $V$, we have that $(\f(u),\f(t^{-1}v)) \in \F[U]$. It follows from Theorem \ref{Th:linearity} that $(\f(u), t^{-1}\f(v)) \in \F[U]$. So $([\f(u)]_\alpha, [\f(v)]_{t(\alpha)}) \in \K[U]$ for each $\alpha \in [0,1]$. Since $\Vert t\Vert <\epsilon$, we conclude that $(\f(u), \f(v)) \in G[U, \epsilon]$. We have thus proved that $\f(G[V, \epsilon](u)) \sub G[U, \epsilon](\f(u))$. This completes the proof.
\end{proof}

Let $(X,\tau)$ be a topological space. If $u \in \F(X)$, then the \textit{endograph} of $u$ is defined as $end(u)=\{(x,\alpha )\in X\times \I: u(x)\geq \alpha\}$. Notice that $end (u)\in \C(X\times \I)$. We also define the \textit{sendograph} of $u$ by $send(u)= end(u)\cap (u_0\times \I) $. Observe that $send (u)\in \K(X\times \I)$.

Consider the uniformity $\U_{\I} $  defined on $\I$ by means of the base $\{V_\epsilon:\epsilon >0 \}$, where $ V_\epsilon =\{(\alpha ,\beta )\in \I \times \I :\vert \alpha -\beta \vert <\epsilon \} $. Let $(X, \U)$ be a uniform space. Given $U \in \U$ and $\epsilon>0$, we define the following sets:
$$S[U, \epsilon]=\{(u,v) \in \F(X) \times \F(X): (send(u), send(v))\in \K[U \times V_\epsilon]\}.$$
The family $\{S[U, \epsilon]: U \in \U,\, \epsilon>0\}$ is base for a uniformity
$\mathcal{U}_S$ on $\F(X)$. The uniformity $\mathcal{U}_S$ is called the \emph{sendograph uniformity} \cite{JSS23}.

Consider now a metric space $(X,d)$. Define the metric $\overline{d}$ on $X\times \I$ as follows: 
$$\overline{d}((x,a),(y,b))=\max\{d(x,y),\vert a-b\vert\}.$$
The \textit{sendograph metric} $d_S$ on $\F(X)$ is the Hausdorff metric $\overline{d}_H$ (on $\K(X\times \I)$) between the non-empty compact subsets $send(u)$ and $send(v)$ for every $u, v \in \F(X)$ (see \cite{Kupka2011}).

\begin{proposition}{\rm \cite{JSS23}} Let $(X,d)$ be a metric space. Then $\U_{d_S}=(\U_d)_S$ and $\U_{d_E}=(\U_d)_E$.
\end{proposition}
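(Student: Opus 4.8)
The plan is to realize both uniformities on $\F(X)$ as pullbacks, along the sendograph map, of one and the same natural uniformity on $\K(X\times\I)$, and symmetrically for the endograph on $\C(X\times\I)$. Let $send\colon\F(X)\to\K(X\times\I)$ be $u\mapsto send(u)$; it is injective, since $u$ is recovered from $send(u)$ by $u(x)=\sup\{\alpha\in\I:(x,\alpha)\in send(u)\}$ for $x\in u_0$ and $u(x)=0$ otherwise. By definition $d_S(u,v)=\overline{d}_H(send(u),send(v))$, so $\U_{d_S}$ is the uniformity that $send$ transports to $\F(X)$ from $\U_{\overline{d}_H}$ on $\K(X\times\I)$; and by definition $S[U,\epsilon]=(send\times send)^{-1}\bigl(\K[U\times V_\epsilon]\bigr)$, where $U\times V_\epsilon:=\{((x,a),(y,b)):(x,y)\in U,\ (a,b)\in V_\epsilon\}$, so $(\U_d)_S$ is what $send$ transports from the uniformity on $\K(X\times\I)$ generated by $\{\K[U\times V_\epsilon]:U\in\U_d,\ \epsilon>0\}$. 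Hence it suffices to show that this family is a base for $\U_{\overline{d}_H}$: two bases of the same uniformity on $\K(X\times\I)$ have $(send\times send)$-preimages that are bases of the same uniformity on $\F(X)$.

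To establish that, I would put together three elementary facts. First, $\overline{d}$ induces the product uniformity $\U_d\times\U_{\I}$ on $X\times\I$: since $\overline{d}((x,a),(y,b))<\epsilon$ is equivalent to $d(x,y)<\epsilon$ and $|a-b|<\epsilon$, the basic entourage $\{\overline{d}<\epsilon\}$ equals the product entourage $V_\epsilon\times V_\epsilon$ (the two factors being the $\epsilon$-entourages for $d$ and for $\I$), and such entourages --- a fortiori the $U\times V_\epsilon$ with $U\in\U_d$, $\epsilon>0$ --- form a base for $\U_d\times\U_{\I}$. Second, if $\mathcal B$ is a base for a uniformity $\mathcal W$ on a set $Y$, then $\{\K[B]:B\in\mathcal B\}$ is a base for $\K(\mathcal W)$; this is immediate from the monotonicity $B_1\sub B_2\Rightarrow\K[B_1]\sub\K[B_2]$. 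Third, for any metric space $(Y,\rho)$ one has $\U_{\rho_H}=\K(\U_\rho)$ --- classical (\cite{Bourbaki,Michael}), and also a consequence of the identity $\K[\{\rho<\epsilon\}]=\{(A,B)\in\K(Y)^2:\rho_H(A,B)<\epsilon\}$, which holds because $A,B$ are compact. Applying the second and third facts with $Y=X\times\I$, $\rho=\overline{d}$, and the base $\{U\times V_\epsilon\}$ of $\U_{\overline{d}}=\U_d\times\U_{\I}$, we obtain that $\{\K[U\times V_\epsilon]:U\in\U_d,\ \epsilon>0\}$ is a base for $\U_{\overline{d}_H}$; together with the first paragraph this yields $\U_{d_S}=(\U_d)_S$.

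For $\U_{d_E}=(\U_d)_E$ I would repeat the argument verbatim with $end(u)\in\C(X\times\I)$ in place of $send(u)$, and $\C$, $\C[\cdot]$, $\C(\mathcal W)$ in place of $\K$, $\K[\cdot]$, $\K(\mathcal W)$. The one adjustment is to the third fact: since $end(u)$ need not be compact, one uses the extended Hausdorff pseudometric on $\C(X\times\I)$, and instead of an equality one has only the sandwich $\{(A,B):\rho_H(A,B)<\epsilon\}\sub\C[\{\rho<\epsilon\}]\sub\{(A,B):\rho_H(A,B)\le\epsilon\}$, which still forces $\U_{\rho_H}=\C(\U_\rho)$. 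This concerns uniformities only, so it is untouched by the fact, recalled after \cite[Example~2.10]{Michael}, that the $\C(\U)$-topology can be strictly finer than the Vietoris topology on $\C(X)$.

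I do not expect a genuine obstacle here: the proof is a chain of canonical identifications --- product uniformity $=\U_{\overline{d}}$, Hausdorff-metric uniformity $=\K(\U_{\overline{d}})$, sendograph metric and uniformity $=$ pullbacks along $send$. The main thing to keep straight is the notational overlap between the composition/product of relations and the ``product entourage'' $U\times V_\epsilon\sub(X\times\I)\times(X\times\I)$ used in $S[U,\epsilon]$; and, in the endograph case, remembering to pass to the extended Hausdorff pseudometric on $\C(X\times\I)$ and to check a two-sided inclusion of entourages rather than an equality.
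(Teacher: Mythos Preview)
The paper does not prove this proposition; it is quoted from \cite{JSS23} without argument, so there is no in-paper proof to compare against. Your proof is correct: by definition $\U_{d_S}$ and $(\U_d)_S$ are the pullbacks along the injective map $send\colon\F(X)\to\K(X\times\I)$ of, respectively, the uniformity $\U_{\overline d_H}$ and the uniformity with base $\{\K[U\times V_\epsilon]\}$, and you identify these two uniformities on $\K(X\times\I)$ via the chain $\U_{\overline d_H}=\K(\U_{\overline d})=\K(\U_d\times\U_{\I})$, using that $\{U\times V_\epsilon:U\in\U_d,\ \epsilon>0\}$ is a base for $\U_d\times\U_{\I}$ and that $\K[\,\cdot\,]$ carries a base for a uniformity to a base for the Hausdorff uniformity. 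The compactness argument you give for the equality $\K[\{\rho<\epsilon\}]=\{(A,B):\rho_H(A,B)<\epsilon\}$ is correct, as is the sandwich inclusion you use in the $\C$-case.

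One small remark: the present paper never actually spells out the definitions of $d_E$ or of the endograph uniformity $(\U_d)_E$ --- only the sendograph versions are defined explicitly --- so for the second equality you are (correctly) supplying the natural definitions via $end\colon\F(X)\to\C(X\times\I)$ and the extended Hausdorff pseudometric on $\C(X\times\I)$.
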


Consider a topological space $X$ and a continuous function $f: X \to X$. We define the extension of $f$ to $\K(X)$ as the function $\overline{f}: \K(X) \to \K(X)$ with $\overline{f}(A)=f(A)$ for each $A \in \K(X)$.

The following result is easy to show. So we omit its proof.

\begin{lemma}\label{Le:send}
If $X$ is a Hausdorff space, $u \in \F(X)$ and $f:X \to X$ is continuous, then $send(\f(u))=\overline{f_I}(send(u))$, where $f_I=f\times Id_{[0,1]}$.
\end{lemma}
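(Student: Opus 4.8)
The plan is to establish the set equality by a direct double inclusion, the only real input being Proposition~\ref{Prop:Zadeh's_level}. First I would unwind the two sides: by definition of the hyperspace extension, $\overline{f_I}(send(u)) = f_I(send(u)) = \{(f(x),\alpha) : (x,\alpha)\in send(u)\}$, while $send(u)=\{(x,\alpha)\in u_0\times\I : u(x)\ge\alpha\}$; note that $u_0\times\{0\}\subseteq send(u)$, since every point of $X$ lies in $end(u)$ at level $0$. I would also record upfront that $(\f(u))_0=f(u_0)$: this is Proposition~\ref{Prop:Zadeh's_level} at $\alpha=0$ combined with the fact that $f(u_0)$, being a continuous image of a compact set, is compact and hence closed in the Hausdorff space $X$ (so no closure is needed).

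For the inclusion $send(\f(u))\subseteq\overline{f_I}(send(u))$: given $(y,\alpha)\in send(\f(u))$, one has $y\in(\f(u))_0=f(u_0)$ and $\f(u)(y)\ge\alpha$. When $\alpha>0$, Proposition~\ref{Prop:Zadeh's_level} gives $y\in[\f(u)]_\alpha=f(u_\alpha)$, so $y=f(x)$ for some $x\in u_\alpha\subseteq u_0$, and then $(x,\alpha)\in send(u)$; when $\alpha=0$, any preimage $x\in u_0$ of $y$ works, since $(x,0)\in send(u)$. In both cases $f_I(x,\alpha)=(y,\alpha)$, so $(y,\alpha)\in\overline{f_I}(send(u))$.

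For the reverse inclusion: given $(y,\alpha)=f_I(x,\alpha)$ with $(x,\alpha)\in send(u)$, we have $x\in u_0$, $u(x)\ge\alpha$ and $y=f(x)$; hence $y\in f(u_0)=(\f(u))_0$. If $\alpha>0$ then $x\in u_\alpha$, so $y\in f(u_\alpha)=[\f(u)]_\alpha$ by Proposition~\ref{Prop:Zadeh's_level}, i.e. $\f(u)(y)\ge\alpha$; if $\alpha=0$ this is automatic. Thus $(y,\alpha)\in send(\f(u))$, which finishes the argument.

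I do not expect a genuine obstacle here: the only points requiring care are treating the level $\alpha=0$ separately (there one uses $u_0\times\{0\}\subseteq send(u)$ rather than the cut description) and invoking Proposition~\ref{Prop:Zadeh's_level} at $\alpha=0$ to identify $(\f(u))_0$ with $f(u_0)$. Everything else is routine translation between the two descriptions of the sendograph, which is presumably why the authors omit the proof.
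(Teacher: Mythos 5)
Your proof is correct. The paper in fact omits the proof of this lemma as ``easy to show,'' and your double-inclusion argument --- unwinding both sides, invoking Proposition~\ref{Prop:Zadeh's_level} to identify $[\f(u)]_\alpha$ with $f(u_\alpha)$ (including at $\alpha=0$, where compactness of $f(u_0)$ makes the closure redundant), and handling the level $\alpha=0$ via $u_0\times\{0\}\subseteq send(u)$ --- is precisely the routine verification the authors intend.
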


We now show the continuity of Zadeh's extension with respect to the sendograph uniformity.

\begin{theorem}\label{Th:Conti-U_S}
Let $(X, \U)$ be a uniform space. If $f: (X, \U) \to (X, \U)$ is continuous, then $\f: (\F(X), \U_S) \to (\F(X), \U_S)$ is continuous.
\end{theorem}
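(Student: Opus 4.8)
The plan is to mimic the structure of the proofs of Theorems \ref{Th:Conti-U_infinito} and \ref{Th:Conti-U_0}, but now working inside the product uniform space $(X\times \I,\, \U\times\U_{\I})$ and using the sendograph description of the basic entourages $S[U,\epsilon]$. The key observation, recorded in Lemma \ref{Le:send}, is that $send(\f(u))=\overline{f_I}(send(u))$, where $f_I=f\times \mathrm{Id}_{[0,1]}\colon X\times\I\to X\times\I$ is continuous with respect to the product uniformity. Thus $\f$ is, through the sendograph map $u\mapsto send(u)$, essentially the hyperspace extension $\overline{f_I}$ restricted to the subspace $\{send(u):u\in\F(X)\}\sub\K(X\times\I)$, and continuity of $\f$ with respect to $\U_S$ will follow from continuity of $\overline{f_I}$ with respect to the Vietoris/hyperspace uniformity $\K(\U\times\U_{\I})$.

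First I would fix $u\in\F(X)$, an entourage $U\in\U$, and $\epsilon>0$; a basic neighbourhood of $\f(u)$ in $(\F(X),\U_S)$ is $S[U,\epsilon](\f(u))$, which by definition consists of those $v$ with $(send(\f(u)),send(v))\in\K[U\times V_\epsilon]$. The set $K:=send(u)$ is a compact subset of the uniform space $(X\times\I,\,\U\times\U_{\I})$, and $f_I$ is continuous there. So I would apply Lemma \ref{Le:compact} to the compact set $K=send(u)$, the continuous map $f_I$, and a symmetric entourage $W\in\U\times\U_{\I}$ with $W\sub U\times V_\epsilon$: this yields an entourage $\widetilde V\in\U\times\U_{\I}$ such that $(p,q)\in\widetilde V\cap(K\times(X\times\I))$ implies $(f_I(p),f_I(q))\in W$. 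Shrinking if necessary, pick $V\in\U$ and $\delta>0$ with $V\times V_\delta\sub\widetilde V$, and without loss of generality take $\delta\le\epsilon$.

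Next I would show $\f(S[V,\delta](u))\sub S[U,\epsilon](\f(u))$. Let $v\in S[V,\delta](u)$, so $(send(u),send(v))\in\K[V\times V_\delta]$, i.e.\ $send(u)\sub (V\times V_\delta)(send(v))$ and $send(v)\sub (V\times V_\delta)(send(u))$. For the first containment: given $p\in send(\f(u))=\overline{f_I}(send(u))$, write $p=f_I(p_0)$ with $p_0\in send(u)\sub K$; there is $q_0\in send(v)$ with $(p_0,q_0)\in V\times V_\delta\sub\widetilde V$, hence by the choice of $\widetilde V$ we get $(f_I(p_0),f_I(q_0))\in W\sub U\times V_\epsilon$, and $f_I(q_0)\in send(\f(v))$; thus $p\in (U\times V_\epsilon)(send(\f(v)))$. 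For the reverse containment, given $q\in send(\f(v))=\overline{f_I}(send(v))$, write $q=f_I(q_0)$ with $q_0\in send(v)$; there is $p_0\in send(u)\sub K$ with $(p_0,q_0)\in V\times V_\delta\sub\widetilde V$, so again $(f_I(p_0),f_I(q_0))\in W\sub U\times V_\epsilon$ with $f_I(p_0)\in send(\f(u))$; thus $q\in (U\times V_\epsilon)(send(\f(u)))$. Together these give $(send(\f(u)),send(\f(v)))\in\K[U\times V_\epsilon]$, i.e.\ $\f(v)\in S[U,\epsilon](\f(u))$, which is the desired inclusion and finishes the proof.

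The routine parts are the bookkeeping with product entourages (choosing $V$ and $\delta$ from $\widetilde V$, and arranging $W\sub U\times V_\epsilon$ symmetric), which are standard. The only genuinely load-bearing step is the applicability of Lemma \ref{Le:compact} in the product uniform space to the compact set $send(u)$ and the map $f_I=f\times\mathrm{Id}$ — and this is unproblematic since $f_I$ is continuous, $\U\times\U_{\I}$ is a genuine uniformity, and $send(u)$ is compact. One should also use Lemma \ref{Le:send} twice (for $u$ and for $v$) to pass between $send(\f(\cdot))$ and $\overline{f_I}(send(\cdot))$; this is the same maneuver that in the $\U_\infty$ proof used Proposition \ref{Prop:Zadeh's_level} to pass between $[\f(u)]_\alpha$ and $f(u_\alpha)$.
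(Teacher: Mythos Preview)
Your proof is correct and follows essentially the same route as the paper's: factor $\f$ through the sendograph map using Lemma~\ref{Le:send} and reduce to the continuity of $\overline{f_I}$ on $\K(X\times\I)$ at the point $send(u)$. The only cosmetic difference is that the paper invokes the continuity of $\overline{f_I}\colon\K(X\times\I)\to\K(X\times\I)$ as a known fact (so $W\in\U$ and $\delta>0$ with $\overline{f_I}(\K[W\times V_\delta](send(u)))\sub\K[U\times V_\epsilon](\overline{f_I}(send(u)))$ are obtained in one line), whereas you unpack that step explicitly via Lemma~\ref{Le:compact}; just make sure to take $\widetilde V$ (and hence $V$) symmetric so that both containments go through as written.
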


\begin{proof}
Suppose that $f: (X, \U) \to (X, \U)$ is continuous. Take $U \in \U$, $\epsilon>0$, and $u \in \F(X)$.  Since $\overline{f_I}: \K(X\times \I) \to \K(X\times \I)$ is continuous, we can find $W \in \U$ and $\delta>0$ such that $\overline{f_I}(\K[W \times V_\delta](send(u))) \sub \K[U \times V_\epsilon](\overline{f_I}(send(u)))$. Pick $v \in S[W,\delta ](u)$. Then
$send(v)\in \K[W \times V_\delta](send(u))$.
By Lemma \ref{Le:send}, we have that $send(\f(v)) \in \K[U \times V_\epsilon](send(\f(u)))$, i.e., $\f(v) \in S[U, \epsilon](\f(u))$. We conclude that $\f(S[W, \delta](u)) \sub S[U, \epsilon](\f(u))$. This proves the continuity of $\f: (\F(X), \U_S) \to (\F(X), \U_S)$.
\end{proof}

We have the following relationship between the three uniformities defined above.

\begin{proposition}{\rm \cite{JSS23}}\label{Prop:Topologies-incluisions}
Consider a uniform space $(X, \U)$. Then $\tau(\U_S) \sub \tau(\U_0) \sub \tau(\U_{\infty})$.
\end{proposition}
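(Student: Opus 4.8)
Proof proposal for Proposition \ref{Prop:Topologies-incluisions} ($\tau(\U_S) \sub \tau(\U_0) \sub \tau(\U_{\infty})$).

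The plan is to establish the two inclusions separately, in each case by the standard uniform-space technique: to show $\tau(\U') \sub \tau(\U'')$ it suffices to show that for every basic entourage $E'$ of $\U'$ and every $u \in \F(X)$, there is a basic entourage $E''$ of $\U''$ with $E''(u) \sub E'(u)$; this is exactly the condition that the identity map $(\F(X), \U'') \to (\F(X), \U')$ is continuous at $u$, which gives $\tau(\U'')$ finer than $\tau(\U')$. (In fact one gets the stronger statement that the identity is uniformly continuous, since the $E''$ produced will not depend on $u$, but only the topological inclusion is asserted.)

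\emph{First inclusion, $\tau(\U_0) \sub \tau(\U_{\infty})$.} Fix a basic Skorokhod entourage $G[U, \epsilon]$ with $U \in \U$, $\epsilon > 0$, and fix $u \in \F(X)$. I claim $\F[U](u) \sub G[U, \epsilon](u)$. Indeed, if $v \in \F[U](u)$ then $(u_\alpha, v_\alpha) \in \K[U]$ for every $\alpha \in \I$; taking $t = \id_{[0,1]} \in \mathbb{T}$, we have $\Vert t \Vert = 0 < \epsilon$ and $(u_\alpha, v_{t(\alpha)}) = (u_\alpha, v_\alpha) \in \K[U]$ for all $\alpha$, so $(u,v) \in G[U,\epsilon]$. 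Hence the level-wise entourage $\F[U]$ witnesses $\F[U](u) \sub G[U,\epsilon](u)$, and the inclusion of topologies follows.

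\emph{Second inclusion, $\tau(\U_S) \sub \tau(\U_0)$.} Fix a basic sendograph entourage $S[U, \epsilon]$ and $u \in \F(X)$. The idea is that a small Skorokhod perturbation of $u$ produces only a small Hausdorff perturbation of the sendograph. Concretely, pick a symmetric $W \in \U$ with $W^2 \sub U$ and set $\delta = \epsilon/2$ (or $\min\{\epsilon, \text{something}\}$); I claim $G[W, \delta](u) \sub S[U, \epsilon](u)$. Let $v \in G[W,\delta](u)$, so there is $t \in \mathbb{T}$ with $\Vert t \Vert < \delta$ and $(u_\alpha, v_{t(\alpha)}) \in \K[W]$ for all $\alpha$. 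We must show $(send(u), send(v)) \in \K[U \times V_\epsilon]$, i.e. $send(u) \sub (U \times V_\epsilon)(send(v))$ and symmetrically. Take $(x, a) \in send(u)$ with $a > 0$ (the case $a = 0$ is handled by taking a point of $v_0$ close to $x$, using density of positive levels in the support and upper semicontinuity); then $x \in u_a$, and since $(u_a, v_{t(a)}) \in \K[W]$ there is $y \in v_{t(a)}$ with $(x,y) \in W \sub U$. Then $(y, t(a)) \in send(v)$ and $|a - t(a)| \le \Vert t \Vert < \delta < \epsilon$, so $(x,a) \in (U \times V_\epsilon)(y, t(a))$. The reverse inclusion is symmetric, using $t^{-1}$ and $\Vert t^{-1} \Vert = \Vert t \Vert$. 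This gives $(send(u), send(v)) \in \K[U \times V_\epsilon]$, hence $v \in S[U,\epsilon](u)$.

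\emph{Main obstacle.} The only delicate point is the bookkeeping at level $\alpha = 0$: the sendograph $send(u) = end(u) \cap (u_0 \times \I)$ includes the slab $u_0 \times \{0\}$ over the whole (closed) support, whereas the $\K[W]$-closeness of the $\alpha$-cuts is most directly controlled for $\alpha > 0$. One must check that closeness of $u_0$ and $v_{t(0)} = v_0$ in $\K[W]$, together with $0 = t(0)$ if $t$ fixes $0$ (which every $t \in \mathbb{T}$ does, since $t$ is an increasing homeomorphism of $[0,1]$), handles the base slab; in fact $t(0) = 0$ and $t(1) = 1$ automatically, which simplifies matters. Beyond this, everything is a routine unwinding of the definitions of $\K[\cdot]$, $G[\cdot,\cdot]$, and $S[\cdot,\cdot]$, together with the triangle-type inequality $W^2 \sub U$; no compactness argument or appeal to the ambient function $f$ is needed here.
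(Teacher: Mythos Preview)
Your argument is correct. Note that in this paper Proposition~\ref{Prop:Topologies-incluisions} is quoted from \cite{JSS23} without proof, so there is no in-paper proof to compare against; your direct verification via the entourage definitions is exactly the intended route. Two cosmetic remarks: in the second inclusion the hypothesis $W^2\sub U$ is stronger than needed (a symmetric $W\sub U$ already suffices, since you only ever use $W\sub U$), and $\delta=\epsilon$ works just as well as $\epsilon/2$. Also, your initial parenthetical about handling $a=0$ via ``density of positive levels and upper semicontinuity'' is unnecessary and a bit misleading; as you correctly observe in the final paragraph, every $t\in\mathbb{T}$ fixes $0$, so $(u_0,v_{t(0)})=(u_0,v_0)\in\K[W]$ is given directly by the Skorokhod condition at $\alpha=0$, and the base slab requires no separate treatment.
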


\section{Transitivity on some uniformities}

A \textit{dynamical system} is a pair $(X,f)$, where $X$ is a topological space and $f \colon X \to X$ is a continuous function. 
In this section, we mainly characterize the transitivity of the dynamical systems $\f \colon (\F(X), \U_\infty) \to (\F(X), \U_\infty)$,  $\f \colon (\F(X), \U_0) \to (\F(X), \U_0)$, and $\f \colon (\F(X), \U_S) \to (\F(X), \U_S)$ (see Theorem \ref{Th:transitivity}). 

Let $X$ be a topological space and $f \colon X \to X$ be a continuous function. 
Let us recall that a dynamical system $(X,f)$ is \textit{transitive} if for every non-empty open subsets $U$ and $V$ of $X$, there exists $n 	\in \mathbb{N}$ such that $f^{n}(U) \cap V \neq \emptyset$. We say that $(X,f)$ is \textit{weakly mixing} if $f\times f \colon X\times X \to X \times X$ is transitive. Let us recall that $\overline{f}\colon \K(X) \to \K(X)$ is defined by $\overline{f}(K)=f(K)$ for each $K 	\in \K(X)$.

Let $f \colon X \to X$ be a continuous function on a topological space $X$. Banks \cite{Banks} and Peris \cite{Peris} showed that $(X,f)$ is weakly mixing if and only if $(\mathcal{K}(X),\overline{f})$ is transitive. To be precise, they show the following.


\begin{theorem}\label{Th:caso_autonomo}{\rm \cite{Banks, Peris}}
Let $f \colon X \to X$ be a continuous function on a topological space $X$. Then the following conditions are equivalent:

\begin{itemize}
	\item[{\rm(1)}] $(X,f)$ is weakly mixing.

	\item[{\rm(2)}] $(\mathcal{K}(X),\overline{f})$ is weakly mixing.

	\item[{\rm(3)}] $(\mathcal{K}(X),\overline{f})$ is transitive.
\end{itemize}
\end{theorem}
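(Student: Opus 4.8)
The plan is to prove the cycle of implications $(1)\Rightarrow(2)\Rightarrow(3)\Rightarrow(1)$. The implication $(2)\Rightarrow(3)$ is immediate, because a weakly mixing system is always transitive: for non-empty open $\mathcal{U},\mathcal{V}\sub\K(X)$, applying transitivity of $\overline{f}\times\overline{f}$ to $\mathcal{U}\times\mathcal{U}$ and $\mathcal{V}\times\mathcal{V}$ yields $n\in\N$ with $\overline{f}^{\,n}(\mathcal{U})\cap\mathcal{V}\neq\emptyset$. The two genuine implications rest on two classical facts of Furstenberg about weak mixing, which I would quote rather than reprove — they are the dynamical heart of the statement and are recorded in \cite{Banks, Peris}: (F1) if $(X,f)$ is weakly mixing, then for every $m\in\N$ and all non-empty open $A_1,\dots,A_m,B_1,\dots,B_m\sub X$ there is $n\in\N$ with $f^n(A_i)\cap B_i\neq\emptyset$ for every $i$ (equivalently, $f\times\cdots\times f$ ($m$ factors) is transitive on $X^m$); (F2) $(X,f)$ is weakly mixing if and only if for all non-empty open $U_1,U_2,V\sub X$ there is $n\in\N$ with $f^n(U_1)\cap V\neq\emptyset$ and $f^n(U_2)\cap V\neq\emptyset$. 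Apart from these, I would use only that the sets $\mathcal{V}\langle V_1,\dots,V_k\rangle$ form a base for the Vietoris topology on $\K(X)$, that every finite subset of $X$ is compact, and that $\overline{f}^{\,n}(K)=f^n(K)$ for each $K\in\K(X)$.

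For $(1)\Rightarrow(2)$ I would show directly that $\overline{f}\times\overline{f}$ is transitive on $\K(X)\times\K(X)$, using the base of the product formed by the sets $\mathcal{V}\langle U_1,\dots,U_p\rangle\times\mathcal{V}\langle U_1',\dots,U_q'\rangle$; each such set is non-empty, since it contains $(\{a_1,\dots,a_p\},\{b_1,\dots,b_q\})$ for any choices $a_i\in U_i$, $b_j\in U_j'$. Given two such basic sets, with \emph{source} families $(U_i)_{i\le p}$, $(U_j')_{j\le q}$ and \emph{target} families $(V_k)_{k\le r}$, $(V_l')_{l\le s}$, I would apply (F1) to the finite family consisting of all pairs $(U_i,V_k)$ together with all pairs $(U_j',V_l')$, obtaining a single $n\in\N$ with $f^n(U_i)\cap V_k\neq\emptyset$ and $f^n(U_j')\cap V_l'\neq\emptyset$ for all admissible indices. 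Picking $y_{i,k}\in U_i$ with $f^n(y_{i,k})\in V_k$ and $z_{j,l}\in U_j'$ with $f^n(z_{j,l})\in V_l'$, the finite (hence compact) sets $K=\{y_{i,k}\}$ and $K'=\{z_{j,l}\}$ lie in $\mathcal{V}\langle U_1,\dots,U_p\rangle$ and $\mathcal{V}\langle U_1',\dots,U_q'\rangle$ respectively — they are covered by, and meet, each of the listed open sets — while $\overline{f}^{\,n}(K)=f^n(K)\in\mathcal{V}\langle V_1,\dots,V_r\rangle$ and $\overline{f}^{\,n}(K')=f^n(K')\in\mathcal{V}\langle V_1',\dots,V_s'\rangle$ for the same elementary reason. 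Hence $(K,K')$ witnesses that $(\overline{f}\times\overline{f})^n$ meets the second basic set, so $\overline{f}\times\overline{f}$ is transitive, i.e. $(2)$ holds.

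For $(3)\Rightarrow(1)$ I would transfer transitivity of $\overline{f}$ back to $X$ and then invoke (F2). Let $U_1,U_2,V\sub X$ be non-empty and open. Then $\mathcal{V}\langle U_1,U_2\rangle$ and $\mathcal{V}\langle V\rangle=\{K\in\K(X):K\sub V\}$ are non-empty open subsets of $\K(X)$: the former contains $\{a,b\}$ whenever $a\in U_1$, $b\in U_2$, and the latter contains any singleton contained in $V$. By transitivity of $\overline{f}$ there are $n\in\N$ and $K\in\mathcal{V}\langle U_1,U_2\rangle$ with $f^n(K)=\overline{f}^{\,n}(K)\sub V$. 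Choosing $x_1\in K\cap U_1$ and $x_2\in K\cap U_2$, we obtain $f^n(x_1),f^n(x_2)\in f^n(K)\sub V$, so $f^n(U_1)\cap V\neq\emptyset$ and $f^n(U_2)\cap V\neq\emptyset$. By (F2), $f$ is weakly mixing, which closes the cycle.

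The main obstacle is that all of the dynamics is concentrated in the two quoted facts — in particular, the step inside (F1) from ``$f\times f$ transitive'' to ``$f\times\cdots\times f$ ($m$ factors) transitive for all $m$'', i.e. that weak mixing propagates to all finite powers, is the genuinely non-trivial point, and the non-trivial half of (F2) belongs to the same circle of ideas. Everything on the hyperspace side is routine bookkeeping with Vietoris basic open sets; the only delicate points are checking that the Vietoris sets involved are non-empty (done by exhibiting suitable finite subsets of $X$) and using that finite sets are compact, which is precisely what lets the witnessing elements $K$, $K'$ be built inside $\K(X)$. No separation hypotheses beyond those already in force are needed.
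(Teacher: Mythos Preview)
The paper does not supply a proof of this theorem: it is quoted as a known result of Banks \cite{Banks} and Peris \cite{Peris} and used as a black box. So there is no ``paper's own proof'' to compare against, only the original arguments in those references.

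Your cycle $(1)\Rightarrow(2)\Rightarrow(3)\Rightarrow(1)$ is correct and is essentially the argument of Banks and Peris. A couple of remarks. First, your fact (F1) is exactly Theorem~\ref{Th:Banks} of the present paper, so you may cite it from here rather than as an external ingredient; your fact (F2) is not stated in this paper but is the standard Furstenberg--Peris ``$3$-sets'' criterion, and it is indeed what Peris uses for $(3)\Rightarrow(1)$. Second, in $(1)\Rightarrow(2)$ you in fact prove a bit more than necessary: the same construction with a single family of $U_i$'s and $V_k$'s already gives transitivity of $\overline{f}$, and since the identical argument applied to the \emph{disjoint union of two copies} of the index set yields a common $n$ for both coordinates, weak mixing of $\overline{f}$ follows with no extra idea --- exactly as you wrote it. The bookkeeping with finite witness sets $K=\{y_{i,k}\}$ is the standard device and is handled correctly, including the check that $K$ is covered by and meets every $U_i$ and that $f^n(K)$ is covered by and meets every $V_k$.

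In short: the proposal is correct and matches the approach of the cited sources; there is nothing in the present paper to diverge from.
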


A dynamical system $(X,f)$ is \textit{weakly mixing of order $m$} ($m \geq 2$) if the function
$$\underbrace{f \times \cdots \times f}_{m-times} \colon X^{m} \to X^{m}$$
\noindent is transitive. We need the following five results in the proof of Theorem \ref{Th:transitivity}.

\begin{theorem}{\rm \cite[Theorem 1]{Banks}}\label{Th:Banks}
If $(X,f)$ a is weakly mixing dynamical system, then $(X,f)$ is weakly mixing of all orders.
\end{theorem}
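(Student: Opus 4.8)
The plan is to pass to the usual combinatorial description of transitivity. For nonempty open sets $A,B\subseteq X$ put $N(A,B)=\{n\in\mathbb{N}:f^{n}(A)\cap B\neq\emptyset\}$. Then $f$ is transitive precisely when $N(A,B)\neq\emptyset$ for all nonempty open $A,B$; and, since basic open boxes form a base for $X\times X$, the map $f$ is weakly mixing precisely when $N(A_{1},B_{1})\cap N(A_{2},B_{2})\neq\emptyset$ for all nonempty open $A_{1},B_{1},A_{2},B_{2}$. Likewise, $f$ is weakly mixing of order $m$ precisely when $\bigcap_{i=1}^{m}N(U_{i},V_{i})\neq\emptyset$ for all nonempty open $U_{1},V_{1},\dots,U_{m},V_{m}$. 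So the statement to be proved is exactly: the two-fold nonempty-intersection property for the sets $N(\cdot,\cdot)$ forces the $m$-fold one, for every $m\geq 2$.

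The heart of the argument is the following lemma: if $f$ is weakly mixing and $U_{1},V_{1},U_{2},V_{2}$ are nonempty open, then there exist nonempty open $W,Z\subseteq X$ with $N(W,Z)\subseteq N(U_{1},V_{1})\cap N(U_{2},V_{2})$. I would prove it by applying weak mixing to the \emph{crossed} pairs $(U_{1},U_{2})$ and $(V_{1},V_{2})$: this yields $m\in\mathbb{N}$ with $f^{m}(U_{1})\cap U_{2}\neq\emptyset$ and $f^{m}(V_{1})\cap V_{2}\neq\emptyset$. Set $W=U_{1}\cap f^{-m}(U_{2})$ and $Z=V_{1}\cap f^{-m}(V_{2})$; both are nonempty and open, and $W\subseteq U_{1}$, $Z\subseteq V_{1}$, $f^{m}(W)\subseteq U_{2}$, $f^{m}(Z)\subseteq V_{2}$. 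Now take $n\in N(W,Z)$ and $x\in W$ with $f^{n}(x)\in Z$. From $W\subseteq U_{1}$ and $Z\subseteq V_{1}$ we get $n\in N(U_{1},V_{1})$; and since $f^{m}(x)\in U_{2}$ while $f^{n}\bigl(f^{m}(x)\bigr)=f^{m}\bigl(f^{n}(x)\bigr)\in f^{m}(Z)\subseteq V_{2}$, we also get $n\in N(U_{2},V_{2})$. Hence $N(W,Z)\subseteq N(U_{1},V_{1})\cap N(U_{2},V_{2})$.

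With the lemma in hand the conclusion follows by a short induction on $m$. For $m=1$ take $(W,Z)=(U_{1},V_{1})$. Given nonempty open $U_{1},V_{1},\dots,U_{m+1},V_{m+1}$, the inductive hypothesis furnishes nonempty open $W',Z'$ with $N(W',Z')\subseteq\bigcap_{i=1}^{m}N(U_{i},V_{i})$; applying the lemma to $(W',Z')$ and $(U_{m+1},V_{m+1})$ produces nonempty open $W,Z$ with $N(W,Z)\subseteq N(W',Z')\cap N(U_{m+1},V_{m+1})\subseteq\bigcap_{i=1}^{m+1}N(U_{i},V_{i})$. Since a weakly mixing map is in particular transitive, $N(W,Z)\neq\emptyset$, so $\bigcap_{i=1}^{m}N(U_{i},V_{i})\neq\emptyset$ for every $m$ and every choice of nonempty open sets; equivalently, $f\times\cdots\times f$ ($m$ times) is transitive for every $m\geq 2$, i.e.\ $(X,f)$ is weakly mixing of all orders.

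I expect the only subtle point to be the lemma, and within it the two decisions that make it work: feeding weak mixing the crossed pairs $(U_{1},U_{2})$, $(V_{1},V_{2})$ rather than the given pairs, and exploiting the commutation $f^{n}\circ f^{m}=f^{m}\circ f^{n}$ so that the \emph{same} integer $n$ witnesses membership in both $N(U_{1},V_{1})$ and $N(U_{2},V_{2})$. The reformulation of transitivity and weak mixing in terms of the sets $N(\cdot,\cdot)$, and the passage from basic open boxes to arbitrary open subsets of $X^{m}$, are routine.
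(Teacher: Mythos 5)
Your argument is correct, and it is essentially the classical proof (Furstenberg's intersection lemma) that Banks himself uses for Theorem~1 of \cite{Banks}; the paper under review does not reprove this result but simply cites it. The two decisive points you flag --- applying weak mixing to the crossed pairs $(U_{1},U_{2})$, $(V_{1},V_{2})$ to build $W=U_{1}\cap f^{-m}(U_{2})$ and $Z=V_{1}\cap f^{-m}(V_{2})$, and using $f^{n}\circ f^{m}=f^{m}\circ f^{n}$ so that one integer witnesses both memberships --- are exactly the content of that lemma, and your reduction of transitivity on $X^{m}$ to basic open boxes is sound.
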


\begin{proposition}{\rm \cite{JSS} }\label{Prop_limits}
Let $X$ be a Hausdorff space and $u	\in \mathcal{F}(X)$. If $L_{u} \colon \mathbb{I} \to (\mathcal{K}(X),\tau_V)$ is the function defined by $L_{u}(\alpha )= u_\alpha$ for all $\alpha	\in I$, then the following holds:

\begin{itemize}
	\item[{\rm i)}] $L_u$ is left continuous on $(0,1]$;

	\item[{\rm ii)}]  $\lim\limits_{\lambda \rightarrow \alpha ^+}L_u(\lambda)=\overline{\bigcup_{\beta>\alpha}u_\beta}$ and $\lim\limits_{\lambda \rightarrow \alpha ^+}L_u(\lambda)\subset u_\alpha$ for each $\alpha 	\in [0,1)$;

	\item[{\rm iii)}] $L_u$ is right continuous at $0$.
\end{itemize}

\noindent Conversely, for any decreasing family $\{u_\alpha :\alpha 	\in 	\I	 \}\sub \mathcal{K}(X)$ satisfying i)--iii), there exists a unique  $w	\in \mathcal{F}(X)$ such that $w_\alpha =u_\alpha $ for every $\alpha 	\in 	\I	$.
\end{proposition}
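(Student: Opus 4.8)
The plan is to prove the two directions separately, with everything resting on a single soft fact about Vietoris convergence of monotone families of non-empty compact sets in a Hausdorff space. I would first isolate that fact in two forms: (a) a decreasing net $\{K_\beta\}$ of non-empty compact sets with $\bigcap_\beta K_\beta\neq\emptyset$ converges in $\tau_V$ to $\bigcap_\beta K_\beta$ — the lower Vietoris condition is automatic since each $K_\beta\supseteq\bigcap_\gamma K_\gamma$, and the upper Vietoris condition is a finite-intersection-property argument applied to the compact sets $K_\beta\setminus V$ for $V$ open with $V\supseteq\bigcap_\beta K_\beta$; and (b) an increasing net $\{K_\lambda\}$ of non-empty compact sets whose union has compact closure $L$ converges in $\tau_V$ to $L$ — the upper condition is automatic since $K_\lambda\subseteq L$, and the lower condition holds because any open set meeting $L$ meets $\bigcup_\lambda K_\lambda$, hence some $K_{\lambda_0}$, hence all later $K_\lambda$.

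For the forward direction, the arithmetic identity $u_\alpha=\bigcap_{0<\beta<\alpha}u_\beta$ (valid because $u(x)\ge\alpha$ iff $u(x)\ge\beta$ for every $\beta<\alpha$) exhibits $\{u_\beta\}_{\beta\uparrow\alpha}$ as a decreasing net of non-empty compacts with intersection $u_\alpha$, so (a) gives i). For ii) and iii), observe that $\{u_\beta\}_{\beta\downarrow\alpha}$ is increasing and $\bigcup_{\beta>\alpha}u_\beta\subseteq u_\alpha\subseteq u_0$, so its closure is a non-empty compact subset of the compact set $u_0$, contained in $u_\alpha$; applying (b) identifies $\lim_{\lambda\to\alpha^+}L_u(\lambda)$ with $\overline{\bigcup_{\beta>\alpha}u_\beta}$, which is ii), and iii) is the case $\alpha=0$ together with the defining equality $u_0=\overline{\bigcup_{\alpha\in(0,1]}u_\alpha}$. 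Non-emptiness throughout comes from normality: $u$ attains the value $1$.

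For the converse, given a decreasing family $\{u_\alpha\}_{\alpha\in\I}\subseteq\K(X)$ satisfying i)--iii), I would set $w(x)=\sup\{\alpha\in\I:x\in u_\alpha\}$. The crux is the claim $\{x:w(x)\ge\alpha\}=u_\alpha$ for every $\alpha\in(0,1]$: ``$\supseteq$'' is immediate, and for ``$\subseteq$'' one uses that left continuity i) forces $\bigcap_{\beta<\alpha}u_\beta=u_\alpha$ — if some $x$ lay in every $u_\beta$ with $\beta<\alpha$ but not in $u_\alpha$, Hausdorffness would separate the compact set $u_\alpha$ from $x$ by disjoint open sets $V\supseteq u_\alpha$ and $W\ni x$, and the upper Vietoris part of i) would put $u_\beta\subseteq V$ for $\beta$ near $\alpha$, contradicting $x\in u_\beta$. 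This identity makes $w$ upper semicontinuous with $w_\alpha=u_\alpha$; normality follows from $u_1\neq\emptyset$; and the support $\overline{\{w>0\}}=\overline{\bigcup_{\alpha>0}u_\alpha}$ equals $u_0$ by the lower Vietoris part of iii), hence is compact. Uniqueness is clear since any $w\in\F(X)$ satisfies $w(x)=\sup\{\alpha\in(0,1]:x\in w_\alpha\}$, so it is recovered from its cuts.

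The step I expect to be the main obstacle is not any single computation but keeping the role of $\alpha=0$ straight: the support $u_0$ is a closure rather than a genuine level set, so left continuity is only asserted on $(0,1]$ and the behaviour at $0$ must be extracted from the ``increasing'' convergence principle (b) together with hypothesis iii), not from (a); one must also continually check that every compact set in sight is non-empty, so that all the Vietoris limits legitimately lie in $\K(X)$.
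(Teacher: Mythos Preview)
The paper does not prove this proposition; it is quoted from \cite{JSS} without proof, so there is nothing in the paper to compare your argument against.

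That said, your argument is correct. The two soft Vietoris facts you isolate---(a) decreasing nets of non-empty compacts converge to their intersection, and (b) increasing nets with relatively compact union converge to the closure of that union---are exactly the engine, and the set-theoretic identities $u_\alpha=\bigcap_{\beta<\alpha}u_\beta$ and $u_0=\overline{\bigcup_{\alpha>0}u_\alpha}$ feed them. In the converse, your separation of a point from a compact set in a Hausdorff space is legitimate (no regularity needed: cover the compact set by finitely many halves of separating pairs), and it correctly recovers $\bigcap_{\beta<\alpha}u_\beta=u_\alpha$ from hypothesis i); your use of only the lower Vietoris half of iii) to obtain $u_0\subseteq\overline{\bigcup_{\alpha>0}u_\alpha}$ is also fine and avoids any appeal to uniqueness of limits in $(\K(X),\tau_V)$. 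The one cosmetic point to tidy is the definition $w(x)=\sup\{\alpha\in\I:x\in u_\alpha\}$ when $x\notin u_0$: the set is then empty, so you should declare $\sup\emptyset=0$ explicitly.
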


Let  $X$ be a Hausdorff space. For any  $u	\in \mathcal{F}(X)$ and $\alpha 	\in [0,1)$, define $u_{\alpha ^+}=\lim\limits_{\lambda \rightarrow \alpha ^+}L_u(\lambda)$. It follows from ii) of the previous proposition that  $L_u$ is right continuous at $\alpha $ if and only if $u_{\alpha ^+}= u_\alpha$.

\begin{proposition}\label{Prop:monotonic}{\rm \cite{JSS23}}
Let $(X, \U)$ be a uniform space. If $W \in \U$, the sets $A,B,C,F,G, H$ are in $\mathcal{K}(X)$ and $A\sub B\sub C$, then we have the following:

\begin{itemize}
	\item[{\rm i)}] If $(A,C) \in \K[W]$, then $(A,B) \in \K[W]$.

	\item[{\rm ii)}] If $(A,C) \in \K[W]$, then $(B,C) \in \K[W]$.
	
	\item[{\rm iii)}] If $(A,F) \in \K[W]$ and $(G, H) \in \K[W]$, then $(A \cup G, F \cup H) \in \K[W]$.
\end{itemize}
\end{proposition}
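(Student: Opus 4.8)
The plan is to unwind the definition $\K[W]=\{(A,B):A\sub W(B)\text{ and }B\sub W(A)\}$ and reduce everything to three elementary facts about the operator $S\mapsto W(S)$ on subsets of $X$: (a) $S\sub W(S)$ for every $S\sub X$, because $\Delta_X\sub W$ forces $x\in W(x)$ for each $x$; (b) if $S\sub T$ then $W(S)\sub W(T)$, immediately from $W(S)=\bigcup_{x\in S}W(x)$; and (c) $W(S\cup T)=W(S)\cup W(T)$, again from the same formula. Once these are recorded, each of i)--iii) is a one-line set chase, so the bulk of the write-up is just bookkeeping.

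For i), I would note that $(A,C)\in\K[W]$ gives $C\sub W(A)$, hence $B\sub C\sub W(A)$; and $A\sub B\sub W(B)$ by (a). Thus $(A,B)\in\K[W]$. For ii), from $(A,C)\in\K[W]$ we get $C\sub W(A)$, and (b) together with $A\sub B$ yields $W(A)\sub W(B)$, so $C\sub W(B)$; also $B\sub C\sub W(C)$ by (a). Hence $(B,C)\in\K[W]$. For iii), I would first observe that $A\cup G$ and $F\cup H$ lie in $\K(X)$, since a finite union of non-empty compact sets is a non-empty compact set. Then the hypotheses give $A\sub W(F)$ and $G\sub W(H)$, so by (c) $A\cup G\sub W(F)\cup W(H)=W(F\cup H)$; symmetrically, $F\sub W(A)$ and $H\sub W(G)$ give $F\cup H\sub W(A)\cup W(G)=W(A\cup G)$. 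Hence $(A\cup G,F\cup H)\in\K[W]$.

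There is essentially no real obstacle here; the only point that needs a moment's attention is tracking which inclusion is supplied by which hypothesis versus which is automatic from $\Delta_X\sub W$. In particular, I would flag that in i) only the half $C\sub W(A)$ of the hypothesis $(A,C)\in\K[W]$ is actually used (the half $A\sub W(B)$ being free via (a)), and dually in ii) only $C\sub W(A)$ is used. I expect the final proof to be one short paragraph, or even three displayed chains of inclusions, with no computation of any substance.
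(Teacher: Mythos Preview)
Your proposal is correct. The paper does not include a proof of this proposition; it is cited from \cite{JSS23} and used as a black box, so there is nothing to compare against beyond noting that your argument is the standard one obtained by unwinding the definition of $\K[W]$.
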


\begin{lemma}\label{Le:partition}{\rm \cite{JSS23}}
Suppose that $(X,\U)$ is a uniform space. If $u	\in \mathcal{F}(X)$, then for each $W \in \U$, there exist numbers $0=\alpha _0 <\alpha _1 < \dots < \alpha _n =1 $ such that $(u_{\alpha^{+}_{k}} ,u_{\alpha_{k+1}}) \in \K[W]$ for $k=0,1,\dots ,n-1$.
\end{lemma}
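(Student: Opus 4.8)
The plan is to reduce the assertion to a compactness argument on the interval $\I$, feeding in the one-sided continuity properties of the cut map $L_u$ recorded in Proposition~\ref{Prop_limits} together with the monotonicity of the relations $\K[\cdot]$.

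I would begin by fixing $W\in\U$ and, via axiom (U3), choosing $W_0\in\U$ with $W_0^2\sub W$. Two standing remarks will be used: every set $\K[W_0]$ is a symmetric entourage of $\K(\U)$, and, by Michael's theorem, $\K(\U)$ induces the Vietoris topology on $\K(X)$; hence Proposition~\ref{Prop_limits} may be applied with the sets $\K[W_0](A)=\{B:(A,B)\in\K[W_0]\}$ serving as basic neighbourhoods of $A\in\K(X)$. Left continuity of $L_u$ on $(0,1]$ then gives, for each $\beta\in(0,1]$, a number $\eta_\beta\in(0,\beta)$ with $(u_\beta,u_\mu)\in\K[W_0]$ whenever $\mu\in[\beta-\eta_\beta,\beta]$; the existence of the right limits $u_{\beta^+}$ gives, for each $\beta\in[0,1)$, a number $\delta_\beta\in(0,1-\beta)$ with $(u_{\beta^+},u_\mu)\in\K[W_0]$ whenever $\mu\in(\beta,\beta+\delta_\beta]$. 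Next I would take the open cover of $\I$ formed by $J_\beta:=(\beta-\eta_\beta,\beta+\delta_\beta)$ for $\beta\in(0,1)$, together with $J_0:=[0,\delta_0)$ and $J_1:=(1-\eta_1,1]$, and extract a finite subcover $J_{\beta_1},\dots,J_{\beta_p}$.

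The partition I would exhibit is $0=\alpha_0<\alpha_1<\dots<\alpha_n=1$, namely the increasing enumeration of the finite set consisting of $0$, $1$, the centres $\beta_1,\dots,\beta_p$, and the endpoints of all the intervals $J_{\beta_1},\dots,J_{\beta_p}$. The crucial point is that each gap $(\alpha_k,\alpha_{k+1})$ contains none of these numbers, in particular no endpoint of any $J_{\beta_i}$; hence every $J_{\beta_i}$ either misses $(\alpha_k,\alpha_{k+1})$ or contains it, and since the $J_{\beta_i}$ cover $\I$ there is an index $i$ with $(\alpha_k,\alpha_{k+1})\sub J_{\beta_i}$. As this gap also avoids the centre $\beta_i$, it lies inside the ``left half'' $(\beta_i-\eta_{\beta_i},\beta_i)$ or the ``right half'' $(\beta_i,\beta_i+\delta_{\beta_i})$ of $J_{\beta_i}$, so $\alpha_k$ and $\alpha_{k+1}$ both lie in $[\beta_i-\eta_{\beta_i},\beta_i]$ or both in $(\beta_i,\beta_i+\delta_{\beta_i}]$ (where $\alpha_k$ is read as $\beta_i$ itself when $\alpha_k=\beta_i$). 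In the first case the choice of $\eta_{\beta_i}$ yields $u_{\alpha_k^+}\sub u_{\alpha_k}\sub W_0(u_{\beta_i})\sub W_0^2(u_{\alpha_{k+1}})\sub W(u_{\alpha_{k+1}})$, while $u_{\alpha_{k+1}}\sub u_{\alpha_k^+}\sub W(u_{\alpha_k^+})$ because $\alpha_{k+1}>\alpha_k$; the second case is the same with $u_{\beta_i^+}$ in place of $u_{\beta_i}$ (and a single factor $W_0$ already suffices when $\alpha_k=\beta_i$, since then $u_{\alpha_k^+}=u_{\beta_i^+}$). Either way $(u_{\alpha_k^+},u_{\alpha_{k+1}})\in\K[W]$ for every $k=0,\dots,n-1$, which is precisely what is wanted; the monotonicity statements in Proposition~\ref{Prop:monotonic} can also be used to phrase these inclusions.

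I expect the main obstacle to be organisational rather than conceptual: one must set up the intervals $J_\beta$ — and especially the boundary ones $J_0$ and $J_1$, which have only a ``right half'' and a ``left half'' respectively — so that the one-sided estimates of Proposition~\ref{Prop_limits} are available exactly on the closed, resp.\ half-open, subintervals invoked in the case analysis, and one must check carefully that passing from a finite subcover to its set of endpoints really does confine every gap $(\alpha_k,\alpha_{k+1})$ to a single half of a single $J_{\beta_i}$. Once that is arranged, the inclusion $W_0^2\sub W$ together with the elementary inclusions $A\sub W(A)$ and $u_{\alpha^+}\sub u_\alpha$ reduce the remaining verification to routine checking.
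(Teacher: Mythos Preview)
The paper does not prove this lemma; it is quoted from \cite{JSS23} and used as a black box, so there is no in-paper proof to compare against.

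Your argument is essentially correct and follows the natural route one would expect: use the one-sided continuity of $L_u$ from Proposition~\ref{Prop_limits} to build, at every $\beta\in\I$, a two-sided interval on which the cuts are $\K[W_0]$-close to the appropriate reference cut ($u_\beta$ to the left, $u_{\beta^+}$ to the right), pass to a finite subcover by compactness of $\I$, and throw in the centres and endpoints as partition points so that every gap $(\alpha_k,\alpha_{k+1})$ is trapped in a single half of a single $J_{\beta_i}$. The squaring trick $W_0^2\sub W$ absorbs the two-step chain through $u_{\beta_i}$ or $u_{\beta_i^+}$, and the trivial inclusion $u_{\alpha_{k+1}}\sub u_{\alpha_k^+}$ handles the reverse containment automatically. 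The boundary intervals $J_0$ and $J_1$ cause no trouble once you note (as you do) that only a right half, respectively a left half, is needed there, and that Proposition~\ref{Prop_limits}~iii) supplies right continuity at $0$ (so $u_{0^+}=u_0$) in any case. One small cosmetic point: when you write ``both in $(\beta_i,\beta_i+\delta_{\beta_i}]$ (where $\alpha_k$ is read as $\beta_i$ itself when $\alpha_k=\beta_i$)'', it is cleaner to split off the case $\alpha_k=\beta_i$ explicitly, since then $u_{\alpha_k^+}=u_{\beta_i^+}$ and the conclusion $(u_{\alpha_k^+},u_{\alpha_{k+1}})\in\K[W_0]\sub\K[W]$ is immediate without composing. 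With that, the verification is routine as you say.
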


The following result is an immediate consequence of Proposition \ref{Prop:monotonic}.

\begin{lemma}\label{Le:refinament}
Suppose that $(X,\U)$ is a uniform space. Take $u	\in \mathcal{F}(X)$, $W \in \U$, and a partition $0=\alpha _0 <\alpha _1 < \dots < \alpha _n =1 $ such that $(u_{\alpha^{+}_{k}} ,u_{\alpha_{k+1}}) \in \K[W]$ for $k=0,1,\dots ,n-1$. If the partition $0=\beta_0<\beta_1<\cdots<\beta_m=1$ is a refinement of the partition $0=\alpha _0 <\alpha _1 < \dots < \alpha _n =1 $, then $(u_{\beta^{+}_{k}} ,u_{\beta_{k+1}})\in \K[W]$ for $k=0,1,\dots ,m-1$.
\end{lemma}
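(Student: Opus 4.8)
The plan is to reduce the claim to a single consecutive pair of the finer partition and then invoke Proposition~\ref{Prop:monotonic}. First I would fix $k\in\{0,1,\dots,m-1\}$ and locate the block of $[0,1]$ containing $\beta_k$ and $\beta_{k+1}$: taking $i$ to be the largest index with $\alpha_i\le\beta_k$, one has $\alpha_{i+1}>\beta_k$, and if $\alpha_{i+1}<\beta_{k+1}$ held, then $\alpha_{i+1}$ (which is one of the $\beta_j$'s, since the $\beta$-partition refines the $\alpha$-partition) would lie strictly between the consecutive points $\beta_k$ and $\beta_{k+1}$, a contradiction; hence $\alpha_i\le\beta_k<\beta_{k+1}\le\alpha_{i+1}$.

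Next I would assemble a nesting chain of level sets. Since $L_u$ is decreasing and $u_{\gamma^{+}}=\overline{\bigcup_{\delta>\gamma}u_\delta}$ by Proposition~\ref{Prop_limits}, the inequalities just obtained yield
$$u_{\alpha_{i+1}}\sub u_{\beta_{k+1}}\sub u_{\beta_k^{+}}\sub u_{\alpha_i^{+}},$$
the first inclusion coming from $\beta_{k+1}\le\alpha_{i+1}$, the second from $\beta_{k+1}>\beta_k$, and the third from $\beta_k\ge\alpha_i$ (it becomes an equality when $\beta_k=\alpha_i$). By hypothesis $(u_{\alpha_i^{+}},u_{\alpha_{i+1}})\in\K[W]$, which is the same as $(u_{\alpha_{i+1}},u_{\alpha_i^{+}})\in\K[W]$ because $\K[W]$ is a symmetric relation on $\K(X)$.

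Then I would run Proposition~\ref{Prop:monotonic} twice along this chain: part~(ii) applied to $u_{\alpha_{i+1}}\sub u_{\beta_{k+1}}\sub u_{\alpha_i^{+}}$ gives $(u_{\beta_{k+1}},u_{\alpha_i^{+}})\in\K[W]$, and then part~(i) applied to $u_{\beta_{k+1}}\sub u_{\beta_k^{+}}\sub u_{\alpha_i^{+}}$ gives $(u_{\beta_{k+1}},u_{\beta_k^{+}})\in\K[W]$, hence $(u_{\beta_k^{+}},u_{\beta_{k+1}})\in\K[W]$ by symmetry; as $k$ was arbitrary, this is the lemma. I do not expect a genuine obstacle here---the result is essentially bookkeeping built on Proposition~\ref{Prop:monotonic}. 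The only points that need a little care are the index argument isolating the enclosing block $[\alpha_i,\alpha_{i+1}]$, the degenerate case $\beta_k=\alpha_i$ (which merely turns one inclusion into an equality), and the appeal to the symmetry of $\K[W]$, so that Proposition~\ref{Prop:monotonic}, phrased for the orientation ``$A\sub C$ with $(A,C)\in\K[W]$'', may be used with the pairs written in either order.
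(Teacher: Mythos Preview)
Your proposal is correct and follows exactly the approach the paper intends: the paper does not give a detailed proof at all, stating only that the lemma ``is an immediate consequence of Proposition~\ref{Prop:monotonic}'', and your argument is precisely the unpacking of that remark---isolating the block $[\alpha_i,\alpha_{i+1}]$ containing $[\beta_k,\beta_{k+1}]$, building the chain $u_{\alpha_{i+1}}\sub u_{\beta_{k+1}}\sub u_{\beta_k^{+}}\sub u_{\alpha_i^{+}}$, and applying parts~(i) and~(ii) of Proposition~\ref{Prop:monotonic}. The auxiliary observations you flag (symmetry of $\K[W]$, the boundary case $\beta_k=\alpha_i$) are all sound.
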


We are now ready to present the main result of this paper.

\begin{theorem}\label{Th:transitivity}
Let $(X,\U)$ be a uniform space. Then the following conditions are equivalent:

\begin{itemize}
	\item[{\rm i)}] $f: (X, \U) \to (X, \U)$ is weakly mixing;

	\item[{\rm ii)}] $\overline{f}:(\mathcal{K}(X),\K(\U)) \to (\mathcal{K}(X),\K(\U))$ is transitive;

	\item[{\rm iii)}] $\f:(\F(X), \U_{\infty}) \to (\F(X), \U_{\infty}) $ is transitive;

	\item[{\rm iv)}] $\f:(\F(X), \U_{0}) \to (\F(X), \U_{0}) $ is transitive;

	\item[{\rm v)}] $\f:(\F(X), \U_{S}) \to (\F(X), \U_{S}) $ is transitive.
\end{itemize}

\end{theorem}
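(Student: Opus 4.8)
The plan is to prove the cycle of implications i) $\Rightarrow$ iii) $\Rightarrow$ iv) $\Rightarrow$ v) $\Rightarrow$ ii) $\Rightarrow$ i), together with the easy link between i) and ii) already supplied by Theorem \ref{Th:caso_autonomo}. Actually, since Theorem \ref{Th:caso_autonomo} gives i) $\Leftrightarrow$ ii) outright, it suffices to close the loop among ii), iii), iv), v). Because $\tau(\U_S)\sub\tau(\U_0)\sub\tau(\U_\infty)$ by Proposition \ref{Prop:Topologies-incluisions}, transitivity is easiest to establish for the coarsest (sendograph) topology and hardest for the finest (level-wise), but transitivity is \emph{not} monotone in the topology in an obvious direction, so these inclusions alone do not give the implications for free; they will, however, be used when we pass information between basic entourages. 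The cleanest route is: first show i) $\Rightarrow$ iii), and then show iii) $\Rightarrow$ v) $\Rightarrow$ ii) is false as stated — rather, I would prove iii) $\Rightarrow$ iv) $\Rightarrow$ v) using the topology inclusions (a set transitive in a finer topology need not be transitive in a coarser one, so this also is not automatic). The safe plan is therefore to prove each of iii), iv), v) \emph{directly} from i), and separately prove that any one of iii), iv), v) implies ii), and invoke Theorem \ref{Th:caso_autonomo} to return to i).

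For i) $\Rightarrow$ iii): fix two nonempty basic open sets in $(\F(X),\U_\infty)$, say $\F[U](u)$ and $\F[U](v)$ for some $U\in\U$ and $u,v\in\F(X)$. Choose a symmetric $W\in\U$ with $W^3\sub U$ (or finitely many compositions, as needed). Using Lemma \ref{Le:partition}, pick a common partition $0=\alpha_0<\alpha_1<\cdots<\alpha_n=1$ that works simultaneously for $u$ and $v$ with respect to $W$, so that $(u_{\alpha_k^+},u_{\alpha_{k+1}})\in\K[W]$ and $(v_{\alpha_k^+},v_{\alpha_{k+1}})\in\K[W]$. The key idea is that a fuzzy set in $\F[W](u)$ is essentially determined, up to $\K[W]$-closeness, by finitely many compact sets $u_{\alpha_1},\dots,u_{\alpha_n}$ (its values on the partition points), and similarly for $v$. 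Now apply weak mixing of all orders (Theorem \ref{Th:Banks}): the map $f\times\cdots\times f$ ($n$ factors) is transitive on $X^n$, hence by Theorem \ref{Th:caso_autonomo} applied to this product (or by a direct argument) we can find $m\in\N$ and compact sets $K_1\supseteq K_2\supseteq\cdots\supseteq K_n$ with $K_i$ close to $u_{\alpha_i}$ in $\K[W]$ and $f^m(K_i)$ close to $v_{\alpha_i}$ in $\K[W]$. To get the nesting $K_1\supseteq\cdots\supseteq K_n$ — which is needed so that the $K_i$ are the cuts of a genuine fuzzy set via Proposition \ref{Prop_limits} — I would work inside $\K(X)$ with the single set $u_{\alpha_1}$ and use the openness of the relevant Vietoris neighborhoods: more carefully, pick a single compact $K$ near $u_{\alpha_1}=u_0$-ish whose image $f^m(K)$ is near $v_{\alpha_1}$, then intersect with suitable open sets to carve out the nested family, using that transitivity of $\overline f$ on $\K(X)$ (which holds by i) $\Leftrightarrow$ ii)) gives us genuine freedom in choosing $K$. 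Assemble the $K_i$ into $w\in\F(X)$ with $w_\alpha$ interpolating the $K_i$ on the partition blocks (constant $=K_{k+1}$ on $(\alpha_k,\alpha_{k+1}]$, say), check via Proposition \ref{Prop_limits} that this is a legitimate element of $\F(X)$, and verify $w\in\F[U](u)$ and $\widehat{f^m}(w)=\widehat f{}^m(w)\in\F[U](v)$ using Proposition \ref{Prop:Zadeh's_level} ($[\widehat f{}^m(w)]_\alpha=f^m(w_\alpha)$) together with Proposition \ref{Prop:monotonic} to control the "$+$"-limits and the blocks between partition points. This gives $\widehat f{}^m(\F[U](u))\cap\F[U](v)\neq\emptyset$, i.e. transitivity.

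For i) $\Rightarrow$ iv) and i) $\Rightarrow$ v): the level-wise construction above already produces a $w$ with $d_\infty$-type closeness, which is stronger than $\U_0$- and $\U_S$-closeness; so once iii) is in hand, and remembering $\tau(\U_S)\sub\tau(\U_0)\sub\tau(\U_\infty)$, a basic open set for $\U_0$ or $\U_S$ contains a basic open set for $\U_\infty$, hence the same $w$ and the same $m$ witness transitivity there. (This is the one place the topology inclusions do the work cleanly: a dense orbit-hitting property for finer neighborhoods implies it for coarser ones at a fixed pair of points, because every coarser basic neighborhood contains a finer one.) For the converse direction, iii) (or iv), or v)) $\Rightarrow$ ii): given nonempty open $\mathcal V_1,\mathcal V_2\sub\K(X)$ in the Vietoris topology, associate to compact sets $A_i\in\mathcal V_i$ the characteristic functions $\chi_{A_i}\in\F(X)$; a Vietoris-neighborhood of $A_i$ pulls back to a $\U_\infty$-neighborhood (indeed all three) of $\chi_{A_i}$ because all level sets of $\chi_{A_i}$ equal $A_i$. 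Transitivity of $\widehat f$ yields $m$ and $w$ with $w$ near $\chi_{A_1}$ and $\widehat f{}^m(w)$ near $\chi_{A_2}$; the $1$-cut $w_1$ is then a compact set near $A_1$ with $[\widehat f{}^m(w)]_1=f^m(w_1)$ near $A_2$ (Proposition \ref{Prop:Zadeh's_level}), so $\overline{f^m}(w_1)\in\mathcal V_2$ and $w_1\in\mathcal V_1$, proving ii). Finally ii) $\Leftrightarrow$ i) is Theorem \ref{Th:caso_autonomo}.

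The main obstacle I anticipate is the nesting issue in the i) $\Rightarrow$ iii) step: weak mixing of order $n$ gives $n$ independent compact sets hit near the targets, but there is no reason the preimages $K_i$ come out decreasing, which is exactly what Proposition \ref{Prop_limits} demands in order to reassemble them into a fuzzy set. Handling this requires either (a) running the argument with a \emph{single} compact set and repeatedly shrinking it inside nested Vietoris-open sets — exploiting that $\overline f$ is even weakly mixing (not just transitive) on $\K(X)$, hence weakly mixing of all orders on $\K(X)$ by Theorem \ref{Th:Banks}, so one can hit a product of Vietoris neighborhoods and then intersect — or (b) replacing $K_i$ by $K_1\cap\cdots\cap K_i$ (which is still compact and nonempty if chosen to meet a common small set) and checking that intersecting only improves the $\K[W]$-closeness on the relevant side via Proposition \ref{Prop:monotonic}. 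Option (a), leaning on weak mixing of all orders for $\overline f$ on $\K(X)$, looks the most robust, and the remaining bookkeeping — choosing the number of compositions $W^k\sub U$, controlling the values on the open partition blocks $(\alpha_k,\alpha_{k+1})$ via Lemma \ref{Le:refinament}, and verifying upper semicontinuity/right-continuity at $0$ — is routine.
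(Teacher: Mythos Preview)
Your overall architecture matches the paper's: use Theorem~\ref{Th:caso_autonomo} for i)~$\Leftrightarrow$~ii), derive iii)~$\Rightarrow$~iv)~$\Rightarrow$~v) from the topology inclusions of Proposition~\ref{Prop:Topologies-incluisions} (this implication \emph{is} automatic: open sets for a coarser topology are open for the finer one, so transitivity passes downward), prove ii)~$\Rightarrow$~iii) by a finite-partition argument, and close the loop with v)~$\Rightarrow$~ii) via characteristic functions. Two concrete gaps remain.

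\medskip
\textbf{The nesting step in ii)~$\Rightarrow$~iii).} You correctly isolate the obstacle, but neither of your fixes is the one that works cleanly. Intersections $K_1\cap\cdots\cap K_i$ may be empty and, even when not, need not stay $\K[W]$-close to $u_{\alpha_i}$; Proposition~\ref{Prop:monotonic} gives no control on intersections. The paper's device is to take \emph{unions}: after using that $(\K(X),\overline f)$ is weakly mixing of all orders to obtain $m$ and $K_1,\dots,K_n,L_1,\dots,L_n\in\K(X)$ with $(u_{\alpha_i},K_i)\in\K[U]$, $(v_{\alpha_i},L_i)\in\K[V]$ and $(\overline f{}^{\,m}(K_i),L_i)\in\K[V]$, one sets $w_{\alpha_i}=\bigcup_{k\ge i}K_k$ and $z_{\alpha_i}=\bigcup_{k\ge i}L_k$. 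These are automatically nested, and since $u_{\alpha_i}=\bigcup_{k\ge i}u_{\alpha_k}$, part~iii) of Proposition~\ref{Prop:monotonic} gives $(w_{\alpha_i},u_{\alpha_i})\in\K[U]$ and likewise for $z$. The same union trick, applied to the images, yields $(\overline f{}^{\,m}(w_{\alpha_i}),z_{\alpha_i})\in\K[V]$, and one finishes with $(\widehat{f^m}(w),v)\in\F[V]^3\sub\F[V_1]$. This replaces your vague ``shrink inside nested Vietoris sets'' with a one-line algebraic fix.

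\medskip
\textbf{The step v)~$\Rightarrow$~ii).} Using the $1$-cut $w_1$ does not work from sendograph closeness. If $(send(w),send(\chi_K))\in\K[U\times V_\varepsilon]$, then for $k\in K$ the point $(k,1)$ is only guaranteed to be $(U\times V_\varepsilon)$-close to some $(x,\alpha)\in send(w)$ with $\alpha>1-\varepsilon$, which gives $x\in w_{1-\varepsilon}$, not $x\in w_1$; so you cannot conclude $K\sub U(w_1)$. The paper uses the \emph{support} $A=w_0$ instead: since $send(w)\sub w_0\times\I$ and $send(\chi_K)=K\times\I$, the inclusions $A\times\{0\}\sub send(w)$ and $K\times\{0\}\sub send(\chi_K)$ immediately yield $A\sub U(K)$ and $K\sub U(A)$, hence $A\in\K[U](K)$; the same argument on the image gives $\overline f{}^{\,n}(A)\in\K[V](L)$. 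With these two corrections your outline becomes the paper's proof.
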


\begin{proof}
By Theorem \ref{Th:caso_autonomo}, we have that i) $\Leftrightarrow$ ii). We have that iii) $\Rightarrow$ iv) $\Rightarrow$ v) by Proposition \ref{Prop:Topologies-incluisions}.

Let us show that ii) implies iii). Suppose that $\overline{f}:(\mathcal{K}(X),\K(\U)) \to (\mathcal{K}(X),\K(\U))$ is transitive. Let us show that $\f:(\F(X), \U_{\infty}) \to (\F(X), \U_{\infty}) $ is transitive.
Take $u,v 	\in \F(X)$ and $U_1, V_1 \in \U$. Put $P=\F[U_1](u)$ and $Q=\F[V_1](v)$.
Take $U,V \in \U$ such that $U^{2} \sub U_1$ and $V^{3} \sub V_1$.
By Lemma \ref{Le:partition}, there exist numbers $0=\alpha _0 <\alpha _1 < \dots < \alpha _n =1 $ such that $(u_{\alpha^{+}_{k}} ,u_{\alpha_{k+1}})\in \K[U]$ for each $k=0,1,\dots ,n-1$. Also, there exist numbers $0=\beta_0 <\beta_1 < \dots < \beta_m =1 $ such that $(v_{\beta^{+}_{k}} ,v_{\beta_{k+1}})\in \K[V]$ for all $k=0,1,\dots ,m-1$. By Lemma \ref{Le:refinament}, we can assume that $n=m$ and $\alpha_i=\beta_i$ for each $i=0,1,...,n$. We shall show that for every $k=0,1,\dots ,n-1$, we have the following:

\begin{equation}\label{Eq:5}
(u_\beta, u_{\alpha_{k+1}}) \in \K[U],\,\,\,\, \textit{if}\,\, \beta	\in (u_{\alpha_{k}} ,u_{\alpha_{k+1}}].
\end{equation}

\noindent For this, notice that $u_{\alpha_{k+1}}\sub u_\beta \sub u_{\alpha^{+}_{k}}$ and $(u_{\alpha_{k+1}}, u_{\alpha^{+}_{k}}) \in \K[U]$.
Proposition \ref{Prop:monotonic} implies that $(u_{\alpha_{k+1}}, u_\beta)\in \K[U]$. This shows (\ref{Eq:5}). 

Since $\overline{f}:(\mathcal{K}(X),\K(\U)) \to (\mathcal{K}(X),\K(\U))$ is transitive, Theorem \ref{Th:caso_autonomo} implies that $(\mathcal{K}(X),\overline{f})$ is weakly mixing. So Theorem \ref{Th:Banks} tells us that the dynamical system  $(\mathcal{K}(X),\overline{f})$ is weakly mixing of all orders. Therefore, there exist $m>0$ and $K_1, K_2,..., K_n, L_1, L_2,..., L_n 	\in \mathcal{K}(X)$ such that for each $1 \leq i \leq n$ we have the following:

\begin{equation}\label{Eq:1}
(u_{\alpha_i}, K_i)\in \K[U],
\end{equation}

\begin{equation}\label{Eq:2}
(v_{\alpha_i}, L_i)\in \K[V],
\end{equation}

\begin{equation}\label{Eq:3}
(\overline{f}^{m}(K_i), L_i)\in \K[W]. 
\end{equation}

\noindent Put $w_{\alpha_i}=\bigcup_{k\geq i} K_k$ for each $1\leq i \leq n$. Proposition \ref{Prop:monotonic} and (\ref{Eq:1}) imply that for every $1\leq i \leq n$, we have 

\begin{equation}\label{Eq:4}
(w_{\alpha_i}, u_{\alpha_i})\in \K[V]. 
\end{equation}

\noindent Let us define $w_{\alpha}$ for each $\alpha 	\in 	\I	$ as follows:

$$w_\alpha=\left\{
  \begin{array}{ll}
  \medskip
  w_{\alpha_1},  & 0\leq \alpha \leq \alpha_1. \\
  \medskip

   w_{\alpha_i}, & \alpha 	\in (\alpha_{i-1},\alpha_i]\,\, \& \,\, i=2,...,n.\\
  
   \end{array}
   \right.$$
   
\noindent The family $\{w_{\alpha}:\alpha	\in 	\I	\}$ satisfies the conditions of Proposition \ref{Prop_limits}. Hence, it determines an element $w 	\in \F(X)$.
Let us show that $w \in	\F[U_1](u)$. Take $\alpha 	\in 	\I	$. Suppose that $\alpha 	\in [0, \alpha_1]$. Then $w_\alpha=w_{\alpha_1}$. Proposition \ref{Prop:monotonic} and the choice of $\alpha_0=0$ and $\alpha_1$ imply that $(u_\alpha, u_{\alpha_1})\in \K[U]$. The latter fact and (\ref{Eq:4}) imply that $(w_\alpha, u_\alpha) \in \K[U]^{2}\sub \K[U^2] \sub \K[U_1]$ for each $\alpha 	\in [0, \alpha_1]$.
We now take $\alpha 	\in (\alpha_{i-1}, \alpha_i]$ for some $1<i\leq n$. Facts (\ref{Eq:5}) and (\ref{Eq:4}) imply that $(w_\alpha, u_\alpha) \in \K[U]^{2} \sub \K[U^2] \sub \K[U_1]$ for each $\alpha \in (\alpha_1,1]$. We can conclude that $(w_\alpha, u_\alpha) \in \K[U_1]$ for each $\alpha \in [0,1]$. Then $(u,w) \in \F[U_1]$. Hence, $w 	\in \F[U_1](u)=P$.

Put $z_{\alpha_i}=\bigcup_{k\geq i} L_k$ for each $1\leq i \leq n$. Let us define $z_{\alpha}$ for each $\alpha 	\in 	\I	$ as 

$$z_\alpha=\left\{
  \begin{array}{ll}
  \medskip
  z_{\alpha _1},  & 0\leq \alpha \leq \alpha_1, \\
  \medskip

   z_{\alpha_i}, & \alpha 	\in (\alpha_{i-1},\alpha_i]\,\, \& \,\, i=2,...,n.\\
  
   \end{array}
   \right.$$

\noindent The family $\{z_{\alpha }:\alpha	\in 	\I	\}$ satisfies the conditions of Proposition \ref{Prop_limits}. Using (\ref{Eq:2}), we can argue as in $w$ to prove the following: 

\begin{equation}\label{Eq10}
(v,z) \in \F[V]^{2}.
\end{equation}

By Proposition \ref{Prop:monotonic} and (\ref{Eq:3}), we have that $(\overline{f}^{m}(w_{\alpha_i}), z_{\alpha_i})\in \K[V]$ for every $i=1,2,...,n$. Since $\overline{f}^{m}(w_{\alpha_i})=f^{m}(w_{\alpha_i})=[\widehat{f^{m}}(w)]_{\alpha_i}$, we conclude that
$([\widehat{f^{m}}(w)]_{\alpha_i}, z_{\alpha_i}) \in \K[V]$ for each $i=1,2,...,n$. Definitions of $w$ and $z$ imply that $([\widehat{f^{m}}(w)]_{\alpha}, z_{\alpha}) \in \K[V]$ for every $\alpha 	\in 	\I	$. Therefore, $(\widehat{f^{m}}(w), z) \in \F[V]$. The latter fact and (\ref{Eq10}) imply that
$(\widehat{f^{m}}(w), v)\in \F[V]^{3} \sub \F[V_1]$. It follows that $\widehat{f^{m}}(w)	\in \F[V_1](v)=Q$. We have thus proved that $\widehat{f^{m}}(w) 	\in \widehat{f^{m}}(P) \cap Q \neq \emptyset$.
Therefore, $\f: (X, \U_{\infty}) \to (X, \U_{\infty})$ is transitive.

\medskip

Finally, let us  prove that v) implies ii). Suppose that $\f:(\F(X), \U_{S}) \to (\F(X), \U_{S}) $ is transitive. Take $K,L 	\in \K(X)$ and $U, V\in \U$. Define $u=\chi _K$ and $v=\chi _L$, which clearly are elements of  $\F(X)$.  From the transitivity of $\f:(\F(X), \U_{S}) \to (\F(X), \U_{S})$,  it follows the existence of $w	\in S[U,1](u)$ and $n	\in \mathbb{N}$ such that $\f^n(w) 	\in S[V,1](v)$. Put $A=w_0$. Then $(send(w), send(u))\in \K[U\times V_1]$ and $(send(\widehat{f^n}(w)), send(v))\in \K[V\times V_1]$. So $A\times\{0\} \sub send(w) \sub [U\times V_1](send(u))$. Hence, for each $a \in A$, we can find $(x_a,\alpha) \in send(u)$ such that $(a, x_a) \in U$, whence $a \in U(x_a) \sub U(K)$. It follows that $A \sub U(K)$. On the other hand, $K\times\{0\} \sub send(u) \sub [U\times V_1](send(w))$. Hence, for each $k \in K$, we can find $(x_k,\alpha) \in send(w)$ such that $(k, x_k) \in U$, whence $k \in U(x_k) \sub U(A)$. It follows that $K \sub U(A)$. Therefore, $A \in \K[U](K)$. Similarly, we can show that $\overline{f}^{n}(A) \in \K[V](L)$. This completes the proof.
\end{proof}


Daniel Jard\'on: Academia de Matem\'aticas, Universidad Aut\'onoma de la Ciudad de M\'exico, Calz. Ermita Iztapalapa S/N, Col. Lomas de Zaragoza 09620, M\'exico D.F., Mexico. E-mail: daniel.jardon@uacm.edu.mx. ORCID: 0000-0001-9054-0788

\medskip

Iv\'an S\'anchez: Departamento de Matem\'aticas, Universidad Aut\'onoma Metropolitana, Av. San Rafael Atlixco 186, Col. Vicentina, Del. Iztapalapa, C.P. 09340, Mexico city, Mexico. E-mail: isr@xanum.uam.mx. ORCID: 0000-0002-4356-9147

\medskip

Manuel Sanchis: Institut de Matem\`{a}tiques i Aplicacions de Castell\'o (I\-M\-AC), Universitat Jaume I, Spain. E-mail: sanchis@mat.uji.es. ORCID: 0000-0002-5496-0977

\end{document}